\theoremstyle{definition}
\numberwithin{equation}{section}
\newcommand{\be}{\beta}
\theoremstyle{definition}\newtheorem{thm}{Theorem}[section]
\theoremstyle{definition}\newtheorem{cor}[thm]{Corollary}
\theoremstyle{definition}\newtheorem{lem}[thm]{Lemma}
\theoremstyle{definition}\newtheorem{prop}[thm]{Proposition}
\theoremstyle{definition}\newtheorem{defn}[thm]{Definition}
\theoremstyle{definition}\newtheorem{Rem}[thm]{Remark}
\theoremstyle{definition}\newtheorem{exam}[thm]{Example}
\theoremstyle{definition}
\def\be{\begin{enumerate}}
	\def\ee{\end{enumerate}}
\begin{document}
\title{Countable strongly annihilated ideals in commutative rings}
\author{R. Mohamadian} 
\address{Department of Mathematics, Shahid Chamran University of Ahvaz, Ahvaz,
	Iran}
\email{mohamadian\_r@scu.ac.ir}
\maketitle
\begin{abstract}
In this paper we introduce and study the concept of  countable strongly annihilated ideal in commutative rings, in particular in rings of continuous functions. We show that a maximal ideal in $C(X)$ is  countable strongly annihilated if and only if it is a real maximal  $z^\circ$-ideal. It turns out that $X$ is an almost $P$-space if and only if countable strongly annihilated ideals and strongly divisible $z$-ideals coincide.  We observe that an almost $P$-space $X$ is Lindel\"{o}f if and only if every countable strongly annihilated ideal is fixed. We give a negative answer to a question raised by Gilmer and McAdam.
\end{abstract}

\vspace{3mm}\noindent\textbf{Keywords:}  countable strongly annihilated ideal, real maximal ideal, strongly divisible ideal, almost $P$-space.  

\vspace{2mm}\noindent\textbf{MSC:} Primary 13A18; Secondary 54C40.

\section{Introduction}
 Throughout this paper all rings are commutative with unity. Let $R$ be a ring and $a\in R$. The principal ideal generated by $a$ denotes by $(a)$. By ${\rm rad}(R)$ we mean the prime radical of $R$. Clearly, ${\rm rad}(R)$ is the set of all nilpotent elements of $R$. If ${\rm rad}(R)=(0)$ then $R$ is called a reduced ring. Elements of a ring $R$ that are not zerodivisors are called regular, and the set
 of all regular elements of $R$ is denote by $r(R)$. An ideal of $R$ is called regular if it  intersects $r(R)$; otherwise, it is called nonregular. The classical ring of quotients of  $R$ denoted by $q(R)$.  
 For each $S\subseteq R$ let $P_S$ (resp. $M_S$) be the intersection of all minimal prime (resp. maximal) ideals of $R$  containing $S$. We use $P_a$ (resp. $M_a$) instead of $P_{\{a\}}$ (resp. $M_{\{a\}}$). A proper ideal $I$ of a ring $R$ is called a $z^\circ$-ideal (resp. $z$-ideal) if for  each $a\in I$, we have $P_a\subseteq I$ (resp. $M_a\subseteq I$).  A proper ideal $I$ of a ring $R$ is called a $sz^\circ$-ideal (resp. $sz$-ideal) if for each $S\subseteq I$, we have $P_S\subseteq I$ (resp. $M_S\subseteq I$).
 If $I$ is an ideal of $R$, then ${\rm Min}(I)$ denotes the set of all prime ideals minimal over $I$ and if $a\in R$, then $I(a)$ denotes $a+I$ in ring of fraction $\frac{R}{I}$.
 
 All topological spaces are completely regular Hausdorff. If $X$ is a space and $A\subseteq X$ then $A^\circ$ and $\overline{A}$ denote ${\rm int}(A)$ and ${\rm cl}(A)$ respectively. $C(X)$ is the ring of all continuous real valued functions on the space $X$ and $C^*(X)$ is the subring of $C(X)$ consisting of all bounded functions in $C(X)$.  The space $X$ is called pseudocompact whenever $C^*(X)=C(X)$. For $f\in C(X)$, the zero-set of $f$ is the set $Z(f)=\{x\in X:f(x)=0\}$. The set-theoretic complement of $Z(f)$ is denoted by $coz(f)$. The support of a function $f\in C(X)$ is the $\overline{coz(f)}$. It is well-known that $C(X)$ is a reduced ring. A topological space $X$ is said to be a $P$-space if $Z(f)=Z^\circ(f)$ for every $f\in C(X)$. The space $X$ is called an almost $P$-space if $Z(f)\neq\emptyset$ implies that $Z^\circ(f)\neq\emptyset$, for every $f\in C(X)$. It is well-known that an ideal $I$ of $C(X)$ is $z^\circ$-ideal (resp. $z$-ideal) if $Z^\circ(f)=Z^\circ(g)$ (resp. $Z(f)=Z(g)$) $f\in I$ and $g\in C(X)$ implies that $g\in I$.  For an ideal $I$ of $C(X)$, we write $Z[I]$ to designate the family of zero-sets $\{Z(f):f\in I\}$. $\upsilon X$ is the Hewitt realcompactification of $X$, $\beta X$ is the Stone–$\check{\rm C}$ech compactification of $X$ and for any $p\in\beta X$, the maximal ideal $M^p$ (resp. the ideal $O^p$) is the set of all $f\in C(X)$ for which $p\in {\rm cl}_{\beta X}Z(f)$ (resp. $p\in{\rm int}_{\beta X}{\rm cl}_{\beta X}Z(f)$). For $p\in X$, we set $M_p = \{f\in C(X):p\in Z(f)\}$ and $O_p = \{f\in C(X) : p\in Z^\circ(f)\}$. Clearly $M_p$ is a maximal ideal and hence it
 is a $z$-ideal and also $O_p$ is a $z^\circ$-ideal.
 A maximal ideal $M$ of $C(X)$ is called real if $\frac{C(X)}{M}\cong\Bbb R$ and it is well-known that $\upsilon X=\{p\in \beta X: M^p ~\mbox{is a real maximal ideal}\}$. A space $X$ is called realcompact if $\upsilon X=X$. 
 For more information about commutative rings, see \cite{AM3}, about general topology, see \cite{E}, about rings of continuous functions, see \cite{GJ} and about $z$-ideals and $z^\circ$-ideals in $C(X)$ and commutative rings, see \cite{M, AKR, AKR0, AM1, AM2, AM4, AM5}.
 
 The paper is organized as follows.  In Section 2, we introduce the concept of the countable strongly annihilated ideal in commutative rings and we investigate their relation to the strongly annihilated and weakly annihilated ideals. In Section 3,  we will answer to a question raised by Gilmer and McAdam in \cite{GM}. In Section 4, we investigate the behavior of the countable strongly annihilated ideals in rings of continuous functions. It turns out that a maximal ideal  $M$ of $C(X)$ is countable strongly annihilated if and only if it is a real maximal $z^\circ$-ideal.

 \section{Countable strongly annihilated ideals}

We start by recalling definition from \cite{GM}. An ideal $I$ of a ring $R$ is called  weakly annihilated (resp. strongly annihilated) if for each $a\in I$ (resp. finite subset $\{a_1,\ldots,a_n\}$ of $I$) and $b\in R\setminus I$, there exists $c\in R$ such that $ca=0$ (resp. $ca_i = 0$ for $i= 1,\ldots, n$,) but $cb\neq 0$.\\
In Lemma 2.15 of \cite {G}, it is shown that in a reduced ring weakly annihilated ideals and $z^\circ$-ideals coincide and in Lemma 2.16 of the same reference it is also shown that  strongly annihilated ideals and $sz^\circ$-ideals  coincide. For details about the aforementioned concepts see \cite{GM, HL, G}.  
\begin{defn}
 An ideal $I$ of a ring $R$ is called countable strongly annihilated if for each countable subset  $\{a_1,a_2,\ldots\}$ of $I$ and $b\in R\setminus I$, there exists $c\in R$ such that $ca_n=0$, for $n=1,2,\ldots$, but $cb\neq 0$.
\end{defn}
Every countable strongly annihilated is strongly annihilated but not conversely, see Example \ref{e1}.  Every strongly annihilated is weakly annihilated, but not conversely, see Example 4.2 in \cite{AM1}. For any ideal $I$ of $R$, ${\rm Ann}(I)$ is countable strongly annihilated. A minimal ideal of a ring is weakly annihilated if and only if countable strongly annihilated. Every element of a weakly annihilated ideal is zerodivisor.

\begin{Rem}
Every reduced principal ideal ring  $R$ is a von Neumann regular ring if and only if every its ideal is countable strongly annihilated. To see this, first suppose that $R$ is a  von Neumann regular ring and $I$ be an ideal of $R$. There is $a\in R$ such that $I=(a)$. In fact, there exists an idempotent $e\in R$ such that $I={\rm Ann}(e)$, that shows $I$ is countable strongly annihilated. The converse is clear by Corollary 1.14 of \cite{AKR}. 
\end{Rem}
\begin{Rem}
It is clear that the only weakly annihilated ideal of an integral domain is zero ideal, but the converse is false. For example, we consider the ring $\Bbb Z_4$.  
\end{Rem}
In  Question 5 of \cite{HL}, Heinzer and Lantz raised the question:  Is a maximal weakly annihilated ideal prime? It seems that this question is settled. However, here we give an affirmative answer to this question. Let us state the following easy, yet crucial, lemma.

\begin{lem}\label{L1}
Let $I$ be a  weakly annihilated ideal of a ring $R$. Then $(I:r)$ is a  weakly annihilated ideal for each $r\notin I$.
\end{lem}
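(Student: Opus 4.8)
The plan is to unwind the definition of the colon ideal and transport the weak-annihilation witness of $I$ across multiplication by $r$. First I would record that $(I:r)$ is a proper ideal: since $r\notin I$, we cannot have $1\in(I:r)$, for otherwise $r=1\cdot r\in I$. Thus $R\setminus(I:r)$ is nonempty and the defining condition for weak annihilation is genuinely in force (not vacuous).

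Next, fix $a\in(I:r)$ and $b\in R\setminus(I:r)$. By the definition of the colon ideal these translate to $ar\in I$ and $br\notin I$. This is the heart of the matter: from the given data I have manufactured an element $ar$ lying in $I$ and an element $br$ lying outside $I$, which is exactly the input to which the weak annihilation of $I$ applies.

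Applying the hypothesis that $I$ is weakly annihilated to $ar\in I$ and $br\in R\setminus I$, I obtain some $c\in R$ with $c(ar)=0$ and $c(br)\neq 0$. The final move is to set $c'=cr$ rather than to use $c$ directly. Using commutativity of $R$, I then compute $c'a=(cr)a=c(ar)=0$ and $c'b=(cr)b=c(br)\neq 0$, so that $c'$ annihilates $a$ but not $b$. Since $a\in(I:r)$ and $b\in R\setminus(I:r)$ were arbitrary, this exhibits the required witness and shows $(I:r)$ is weakly annihilated.

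I do not anticipate any genuine obstacle, as the lemma is elementary; the author themselves flag it as easy yet crucial. The only points needing care are the back-and-forth translation between membership in $(I:r)$ and membership in $I$ after multiplication by $r$, and the observation that the witnessing element must be multiplied by $r$ so that the annihilation of $ar$ upgrades to annihilation of $a$. Commutativity of $R$ is used silently but essentially in the last computation, which is why the hypothesis that $R$ is commutative (fixed at the outset of the paper) is needed.
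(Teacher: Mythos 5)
Your proposal is correct and follows essentially the same route as the paper: translate membership in $(I:r)$ to membership in $I$ via multiplication by $r$, invoke weak annihilation of $I$ on $ar$ and $br$, and take $cr$ as the new witness. The only addition is your (harmless and valid) preliminary check that $(I:r)$ is proper, which the paper omits.
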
		
\begin{proof}
Assume that $a\in (I:r)$ and $b\notin (I:r)$. Hence $ar\in I$ and $br\notin I$. By hypothesis,  there exists $c\in R$ such that $car=0$ but $cbr\neq 0$. Take $d=cr$. This yields $da=0$ and $db\neq 0$. Thus $(I:r)$ is a weakly annihilated ideal.
\end{proof}	

\begin{prop}
Every maximal weakly annihilated ideal is prime.
\end{prop}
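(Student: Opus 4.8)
The plan is to read ``maximal weakly annihilated ideal'' as an ideal that is maximal among the \emph{proper} weakly annihilated ideals of $R$ (this reading is forced: if $I=R$ then $R\setminus I=\emptyset$ and the defining condition holds vacuously, so $R$ would sit above everything and maximality would be meaningless otherwise). I would then argue by contraposition, with the colon-ideal construction of Lemma \ref{L1} doing all the heavy lifting. So suppose $I$ is a proper weakly annihilated ideal that fails to be prime: there exist $a,b\in R\setminus I$ with $ab\in I$. The goal is to manufacture a proper weakly annihilated ideal strictly containing $I$, contradicting maximality.

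The natural candidate is the colon ideal $(I:a)$. Since $a\notin I$, Lemma \ref{L1} applies and tells us at once that $(I:a)$ is weakly annihilated, so no new annihilator argument is needed here. Next I would verify properness: $(I:a)=R$ would give $a=1\cdot a\in I$, contrary to $a\notin I$, hence $(I:a)$ is a \emph{proper} weakly annihilated ideal. Finally I would check the two inclusions that exhibit the strict containment: $I\subseteq (I:a)$ holds for any ideal, while $ab\in I$ says precisely that $b\in(I:a)$, and since $b\notin I$ this containment is proper.

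Putting these together, $(I:a)$ is a proper weakly annihilated ideal with $I\subsetneq (I:a)$, which contradicts the maximality of $I$ among proper weakly annihilated ideals. Therefore no such pair $a,b$ exists and $I$ is prime. I expect essentially no genuine obstacle in this argument, since the substantive step---that colon ideals of weakly annihilated ideals remain weakly annihilated---has already been dispatched in Lemma \ref{L1}; the only points that need care are the convention on properness and the bookkeeping that guarantees $(I:a)\neq R$, both of which are routine once the setup is fixed.
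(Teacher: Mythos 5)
Your proof is correct and follows essentially the same route as the paper's: pass to the colon ideal $(I:a)$, invoke Lemma \ref{L1} to see it is weakly annihilated, note it is proper and contains $I$ together with $b$, and conclude by maximality. The only difference is presentational (you argue by contradiction and spell out why $(I:a)\neq R$, which the paper merely asserts).
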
	 	

\begin{proof}
Let $I$ be a  maximal weakly annihilated ideal of a ring $R$. We must show that $I$ is prime. Suppose that $ab\in I$ and $a\notin I$. Clearly, $I\subseteq (I:a)\neq R$ and $b\in (I:a)$. Now by maximality of $I$ and by Lemma \ref{L1} we conclude that $I=(I:a)$ and we are done.   
\end{proof} 

Similar to the proof of the above lemma and proposition we can show that if $I$ is a countable strongly annihilated ideal then $(I:r)$ is a  countable strongly annihilated ideal for each $r\notin I$ and every maximal countable strongly annihilated ideal is prime.

Let $M$ be a maximal ideal of a reduced ring $R$ and $R(x)$ means the localization of $R[x]$ at the prime ideal ${\mathcal P}=M[x]$. If $I$ is a $z^\circ$-ideal of $R$ which contained in $M$,  then $(I[x])_{\mathcal P}$ may not be a $z^\circ$-ideal in $R(x)$. See Example \ref{e44}. First we need the following lemma.

\begin{lem}\label{L2}
Let $Q, P$ be two  prime ideals in $R$,  $Q\subseteq P$ and $R_P$ be the localization $R$ at $P$. If $Q_P$ is a weakly annihilated ideal in $R_P$, then $Q$ is a weakly annihilated ideal in $R$. 
\end{lem}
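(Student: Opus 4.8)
The plan is to pull the weakly annihilated property back from $R_P$ to $R$ by clearing denominators. Write $S=R\setminus P$, so that $R_P=S^{-1}R$ and the canonical map $R\to R_P$ sends $a\mapsto a/1$. The first thing I would record is a contraction identity: since $Q$ is prime with $Q\subseteq P$, the extended ideal $Q_P=QR_P$ is a proper (indeed prime) ideal of $R_P$ whose contraction is exactly $Q$. Concretely, for $b\in R$ one has $b/1\in Q_P$ if and only if $b\in Q$. This is the standard correspondence between the primes of $R_P$ and the primes of $R$ contained in $P$, and it is what lets $b\notin Q$ produce a genuine element of $R_P\setminus Q_P$.

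Next I would fix $a\in Q$ and $b\in R\setminus Q$ and aim to produce $c\in R$ with $ca=0$ and $cb\neq 0$. Passing to $R_P$, set $\alpha=a/1\in Q_P$ and $\beta=b/1$. By the contraction identity, $b\notin Q$ gives $\beta\in R_P\setminus Q_P$. Applying the hypothesis that $Q_P$ is weakly annihilated in $R_P$ to the pair $(\alpha,\beta)$, I obtain $\gamma=c'/s\in R_P$ (with $c'\in R$ and $s\in S$) such that $\gamma\alpha=0$ but $\gamma\beta\neq 0$ in $R_P$.

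The main step is then to descend these two relations to $R$. From $\gamma\alpha=(c'a)/s=0$ in $S^{-1}R$, the definition of equality of fractions yields an element $t\in S$ with $tc'a=0$ in $R$. Put $c=tc'$. Then immediately $ca=tc'a=0$. For the other condition, note that $\gamma\beta=(c'b)/s\neq 0$ means precisely that no element of $S$ annihilates $c'b$; since $t\in S$, in particular $cb=tc'b\neq 0$. Hence $c$ witnesses the weakly annihilated property for the pair $(a,b)$, and $Q$ is weakly annihilated in $R$.

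I expect the only real obstacle to be the bookkeeping in this last paragraph: the hypothesis in $R_P$ gives an equality of \emph{fractions}, not an equality in $R$, so one must absorb the auxiliary multiplier $t\in S$ needed to clear the denominator into $c$ and then verify that multiplying by $t$ does not spoil the inequality $cb\neq 0$. This is exactly where it matters that $t$ lies in $S=R\setminus P$ and that $\gamma\beta\neq 0$ forbids any element of $S$ from killing $c'b$; the primeness of $Q$, through the contraction identity, is what guarantees $\beta\notin Q_P$ in the first place.
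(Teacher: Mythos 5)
Your proof is correct and follows essentially the same route as the paper's: pass to $R_P$, apply the hypothesis to $a/1$ and $b/1$, clear the denominator by absorbing the auxiliary element $t\in R\setminus P$ into the annihilator, and use that $(c'b)/s\neq 0$ forbids any element of $R\setminus P$ from killing $c'b$. Your explicit justification that $b\notin Q$ implies $b/1\notin Q_P$ (via the prime correspondence, using $Q\subseteq P$) is a detail the paper leaves implicit, but the argument is the same.
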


\begin{proof}
Suppose that  $a\in Q$ and $b\in R\setminus Q$. Then $\frac{a}{1}\in Q_P$ and $\frac{b}{1}\in R_P\setminus Q_P$. By hypothesis, there is $\frac{r}{s}\in R_P$ such that $\frac{ra}{s}=\frac{0}{1}$ and $\frac{rb}{s}\neq\frac{0}{1}$. Hence, there is $t_0\notin P$ such that $rat_0=0$ and also $rbt\neq 0$, for every $t\notin P$. Take $d=rt_0$, then $da=0$ but $db\neq 0$ and we are done. 
\end{proof}
It is well-known that if $R$ is a reduced ring then $R[x]$ is reduced and if $P$ is a prime ideal of $R$, then $R_P$ is also a reduced ring.

\begin{exam}\label{e44}
Suppose that $R$ is the same as the ring in Example 4.2 in \cite{AM1}. It is shown that the ideal $I$ is contained in a $z^\circ$-ideal, while there is no $sz^\circ$-ideal containing $I$. Suppose $\mathcal A$ be the set of all $z^\circ$-ideals of $R$ containing $I$, which is not $sz^\circ$-ideal. By Zorn's Lemma  $\mathcal A$ has a maximal element $J$. Now it can be easily seen that $J$ is a prime $z^\circ$-ideal which is not a $sz^\circ$-ideal.  Since $J$ is not a $sz^\circ$-ideal, then by Proposition 3.9 of \cite{AM1}, we infer that $J[x]$ is not a $z^\circ$-ideal in $R[x]$. Now by the above lemma $(J[x])_{M[x]}$ is not a $z^\circ$-ideal in $R(x)$, where $M$ is the maximal ideal of $R$ which containing $J$. 	
\end{exam}

For $f\in R[x]$, let $C(f)$ denotes the set of all coefficients of $f$.
\begin{prop}
The following statements are equivalent.\\	
a) $I$ is a	countable strongly annihilated ideal in $R$.\\
b) $I[x]$ is a countable strongly annihilated ideal in $R[x]$.\\
c) $I[[x]]$ is a	countable strongly annihilated ideal in $R[[x]]$.
	
\end{prop}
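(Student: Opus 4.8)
The plan is to establish the two equivalences $(a)\Leftrightarrow(b)$ and $(a)\Leftrightarrow(c)$ separately, since the arguments for the polynomial ring $R[x]$ and the power series ring $R[[x]]$ are essentially identical; the only thing that changes is whether a multiplier has finitely or infinitely many coefficients. The single mechanism underlying all four implications is that, for a constant $c\in R$, multiplication of a polynomial (or power series) $f$ by $c$ acts coefficientwise, so $cf=0$ if and only if $c$ kills every element of $C(f)$, and $cf\neq 0$ if and only if $c$ fails to kill at least one coefficient of $f$.

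For $(a)\Rightarrow(b)$ (and identically $(a)\Rightarrow(c)$) I would start with a countable family $\{f_1,f_2,\dots\}\subseteq I[x]$ and some $g\in R[x]\setminus I[x]$. The set $\bigcup_n C(f_n)$ is a countable union of finite sets, hence a countable subset of $I$; enumerate it as $\{a_1,a_2,\dots\}$. Since $g\notin I[x]$, at least one coefficient $b$ of $g$ lies in $R\setminus I$. Applying hypothesis $(a)$ to the countable set $\{a_k\}$ and this $b$ yields $c\in R$ with $ca_k=0$ for all $k$ and $cb\neq 0$. Taking $h=c$ as a constant polynomial, the coefficientwise action gives $hf_n=0$ for every $n$, while the $b$-coefficient of $hg$ equals $cb\neq 0$, so $hg\neq 0$. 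The power series case is verbatim, with $\bigcup_n C(f_n)$ now a countable union of countable sets, still countable.

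For $(b)\Rightarrow(a)$ (and identically $(c)\Rightarrow(a)$) take a countable set $\{a_1,a_2,\dots\}\subseteq I$ and $b\in R\setminus I$, and regard them as constant polynomials; note that $b\notin I$ forces the constant $b$ to lie outside $I[x]$. Hypothesis $(b)$ produces $h\in R[x]$ with $ha_n=0$ for all $n$ and $hb\neq 0$. Writing $h=\sum_j c_j x^j$, the relation $ha_n=0$ forces $c_ja_n=0$ for every $j$ and $n$, whereas $hb\neq 0$ means $c_jb\neq 0$ for some $j$; choosing $c=c_j$ for such an index completes the argument. In the power series case $h$ may have infinitely many coefficients $c_j$, but only a single index with $c_jb\neq 0$ is needed, so nothing changes.

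The only point requiring care---and the reason the definition is stated with \emph{countable} rather than arbitrary subsets---is the countability bookkeeping in the forward directions: one must observe that the totality of coefficients appearing in a countable family of power series is again countable, so that the hypothesis in $R$ genuinely applies to it. Beyond this there is no real obstacle, since the constant-multiplier reduction turns each implication into a short coefficient chase.
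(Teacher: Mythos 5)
Your proof is correct and follows essentially the same route as the paper: reduce $(a)\Rightarrow(b)$ (and $(a)\Rightarrow(c)$) to the countable set $\bigcup_n C(f_n)$ of coefficients together with one coefficient of $g$ outside $I$, and use a constant multiplier acting coefficientwise. The only difference is that you write out the converse direction explicitly, which the paper dismisses as straightforward.
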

\begin{proof}
$(a\Leftrightarrow b)$ Let $S=\{f_n:n=1,\ldots\}$ be a countable subset of $I[x]$ and $g\notin I[x]$. Hence $C=\bigcup_{n=1}^\infty C(f_n)$ is a countable subset of $I$ and there is also an element  ${i_0}$ such that $g_{i_0}\notin I$. By hypothesis, there exists $r\in R$ such that $rC=0$ and $rg_{i_0}\neq0$. This means that $rf_n=0$, for every $n$, and $rg\neq 0$. The converse is straightforward. 
 
$(a\Leftrightarrow c)$ Similar to implication $(a\Leftrightarrow b)$.
\end{proof} 	
\begin{prop}
The following statements are hold.\\
a)  If $I$ is a	countable strongly annihilated ideal in $R$, then $I_S$ is a	countable strongly annihilated   ideal in $q(R)$.\\
b) Let $P$ is  a nonregular ideal in $R$. If $P_S$ is a	countable strongly annihilated ideal in $q(R)$, then $P$ is a countable strongly annihilated ideal in $R$.

\end{prop}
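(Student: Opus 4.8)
The plan is to exploit that $S = r(R)$ consists of non-zerodivisors, so the canonical map $R \to q(R)$, $x \mapsto \frac{x}{1}$, is injective and carries each element of $S$ to a unit. Two consequences drive both parts. First, each element of $S^{-1}I$ has a representative with numerator in $I$. Second, and crucially, for $z \in R$ and $s \in S$ the equality $\frac{z}{s}=0$ in $q(R)$ already forces $z = 0$ in $R$: it yields $uz = 0$ for some $u \in S$, and $u$ being a non-zerodivisor gives $z = 0$. Thus annihilation relations transfer verbatim across the localization, which is exactly the content of the defining condition for a countable strongly annihilated ideal.

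For part (a), I would take a countable family $\left\{\frac{a_n}{s_n}\right\}_{n\ge 1} \subseteq I_S$ with $a_n \in I$, $s_n \in S$, together with $\frac{b}{t} \notin I_S$. Since $1 \in S$, the membership $b \in I$ would give $\frac{b}{t} \in I_S$, so $\frac{b}{t}\notin I_S$ forces $b \notin I$. Applying the hypothesis that $I$ is countable strongly annihilated to $\{a_1,a_2,\dots\} \subseteq I$ and $b \notin I$ produces $c \in R$ with $ca_n = 0$ for all $n$ and $cb \neq 0$. Then $\frac{c}{1}$ is the required witness: it kills each $\frac{a_n}{s_n}=\frac{ca_n}{s_n}=0$, while $\frac{c}{1}\cdot\frac{b}{t}=\frac{cb}{t}\neq 0$ because $cb \neq 0$ and $\frac{1}{t}$ is a unit. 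This direction is routine.

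For part (b) the reduction runs the other way. Nonregularity of $P$ gives $P \cap S = \emptyset$, hence $P_S$ is a proper ideal of $q(R)$ and the hypothesis is not vacuous. Given a countable set $\{a_1,a_2,\dots\} \subseteq P$ and $b \in R\setminus P$, I would pass to $\left\{\frac{a_n}{1}\right\} \subseteq P_S$, invoke the countable strongly annihilated property of $P_S$ against the outside element $\frac{b}{1}$ to obtain $\frac{r}{s} \in q(R)$ with $\frac{r}{s}\cdot\frac{a_n}{1}=0$ for every $n$ and $\frac{r}{s}\cdot\frac{b}{1}\neq 0$, and then set $c = r$. The regularity observation turns $\frac{ra_n}{s}=0$ into $ra_n = 0$ and $\frac{rb}{s}\neq 0$ into $rb \neq 0$, so $c=r$ is the required witness in $R$.

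The main obstacle is the step I glossed over: invoking the hypothesis against $\frac{b}{1}$ requires $\frac{b}{1}\notin P_S$, i.e. that $b\notin P$ survives the localization. Unwinding the definition, $\frac{b}{1}\in P_S$ precisely when $wb \in P$ for some regular $w$, so what must be excluded is a regular element dragging $b$ into $P$ while $b$ stays outside; equivalently, $P$ must be saturated with respect to $S$, that is $P = P_S \cap R$. This is exactly where the nonregularity of $P$ has to do its work, and it is the delicate point of the whole argument: note it is immediate when $P$ is prime, since then $w \in S$ forces $w \notin P$ and primeness yields $b \in P$ from $wb \in P$. Securing this saturation in the stated generality is the crux; once it is in hand, the remainder of part (b) follows mechanically from the transfer of annihilation described above.
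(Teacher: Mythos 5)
Your argument for part (a) is complete and correct: writing each element of $I_S$ with numerator in $I$, pulling the witness $c$ back from $R$, and using that the elements of $S=r(R)$ are non-zerodivisors to transfer the relations $ca_n=0$ and $cb\neq 0$ across the localization is exactly what is needed. The paper offers nothing to compare against here, since its entire proof reads ``It is straightforward.''

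For part (b) you have correctly isolated the crux --- one must know that $b\notin P$ implies $\frac{b}{1}\notin P_S$, i.e.\ that $P$ is saturated with respect to $S$ --- but you leave it unresolved, and in fact it cannot be resolved in the stated generality: nonregularity alone does not give saturation, and the claim as literally written is false. Take $R=\Bbb Z\times\Bbb Z$ and $P=2\Bbb Z\times 0$. Then $P$ is nonregular (every element of $P$ is a zero divisor), $q(R)=\Bbb Q\times\Bbb Q$, and $P_S=\Bbb Q\times 0$, which is countable strongly annihilated: the element $(0,1)$ annihilates every countable subset of $P_S$ and annihilates no element outside $P_S$. Yet $P$ itself is not even weakly annihilated: for $a=(2,0)\in P$ and $b=(1,0)\notin P$, any $c$ with $ca=0$ has first coordinate $0$ and hence $cb=0$. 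The failure occurs precisely at the step you flagged: $(2,1)$ is regular and $(2,1)\cdot(1,0)=(2,0)\in P$, so $\frac{(1,0)}{1}\in P_S$ although $(1,0)\notin P$. Your observation that primeness repairs the step (a regular $w$ cannot lie in a nonregular $P$, so $wb\in P$ forces $b\in P$) is the right fix: part (b) holds, by your argument, under the additional hypothesis that $P$ is prime, or more generally saturated in the sense that $P_S\cap R=P$, and the author's choice of the letter $P$ suggests this was the intent; but no proof of the statement as printed can succeed.
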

\begin{proof}
It is straightforward.
\end{proof}

An ideal $I$ of a ring $R$ is called $\lambda-z^\circ$-ideal whenever $P_S\subseteq I$, for every $S\subseteq I$ with $|S|\leq\lambda$, where $\lambda$ is a cardinal number, see \cite{AM2}. It is obvious that every minimal prime ideal is  a $\lambda-z^\circ$-ideal, where $\lambda\leq|P|$.  Refer to Example \ref{e11} to see a maximal ideal that is a  $c-z^\circ$-ideal

\begin{prop}
Every countable strongly annihilated ideal $I$ of  a reduced ring $R$ is an $\aleph_0-z^\circ$-ideal.	
\end{prop}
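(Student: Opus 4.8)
The plan is to argue by contradiction, playing the annihilator produced by the hypothesis against a description of $P_S$ in terms of minimal primes that is forced by reducedness. First I would unwind the goal: to see that $I$ is an $\aleph_0$-$z^\circ$-ideal I must check $P_S\subseteq I$ for every countable $S=\{a_1,a_2,\cdots\}\subseteq I$, finite sets being a special case of $|S|\leq\aleph_0$. Suppose instead that some $b\in P_S$ fails to lie in $I$. Feeding the countable set $S$ and the element $b\in R\setminus I$ into the definition of countable strongly annihilated produces $c\in R$ with $ca_n=0$ for all $n$ and $cb\neq 0$.

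The hard part will be to show that these data are contradictory, i.e. that $b\in P_S$ together with $ca_n=0$ for all $n$ forces $cb=0$. I would establish this by proving that $cb$ belongs to every minimal prime $P$ of $R$; since $R$ is reduced, the intersection of all minimal primes of $R$ equals ${\rm rad}(R)=(0)$, so $cb=0$, contradicting $cb\neq 0$. To place $cb$ in a fixed minimal prime $P$ I split into two cases. If $S\subseteq P$, then $P$ is one of the minimal primes whose intersection defines $P_S$, so $b\in P_S\subseteq P$ and hence $cb\in P$. If $S\not\subseteq P$, I pick $s\in S$ with $s\notin P$; as $cs=0\in P$ and $P$ is prime, I get $c\in P$, whence $cb\in P$ again.

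This case split is the crux, and it is precisely where reducedness is indispensable, since only ${\rm rad}(R)=(0)$ lets me pass from ``$cb$ lies in every minimal prime'' to ``$cb=0$''. It is worth noting that the argument uses only the implication $b\in P_S\Rightarrow{\rm Ann}(S)\subseteq{\rm Ann}(b)$ and never its converse, so it sidesteps the more delicate question of whether a minimal prime over $S$ must meet ${\rm Ann}(S)$ in its complement; the same two cases even cover the degenerate possibility that no minimal prime contains $S$, in which event $P_S=R$ and only the second case arises. Putting the pieces together, the contradiction $cb=0$ versus $cb\neq 0$ rules out such a $b$, so $P_S\subseteq I$ and $I$ is an $\aleph_0$-$z^\circ$-ideal.
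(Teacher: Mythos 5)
Your proposal is correct and is essentially the paper's own argument arranged contrapositively: the paper picks a single minimal prime $Q$ with $cb\notin Q$ and deduces $S\subseteq Q$, hence $b\in Q$, for a contradiction, while you show $cb$ lies in every minimal prime via the same dichotomy ($S\subseteq P$ versus $c\in P$) and then invoke ${\rm rad}(R)=(0)$. The key ingredients --- the annihilator element $c$ from the definition, primality to split the cases, and reducedness to conclude $cb=0$ --- are identical.
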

\begin{proof}
Let  $S=\{a_n:n=1,\ldots\}$ be a countable subset of $I$ and $b\in P_S$. If $b\notin I$, then there is $c\in R$ such that $ca_n=0$, for any $n$, and $cb\neq0$. Since $cb\notin{\rm rad}(R)=(0)$, there is a minimal prime ideal $Q$ such that $cb\notin Q$. Now $cS=0\in Q$ and $c\notin Q$ conclude that $S\subseteq Q$. This is consequence that $b\in Q$ which is a contradiction. 
\end{proof}

The converse of the above proposition is false. See Example \ref{e1}.

Recall that a ring $R$  satisfies property $A$ if each finitely generated nonregular ideal has a nonzero annihilator. A ring $R$ is said to have 
the annihilator condition (briefly a.c.) if for each finitely generated ideal $I$ of $R$ there exists an element $b\in R$ with ${\rm Ann}(I)={\rm Ann}(b)$. If this element $b\in R$ can be chosen in $I$, then we say $R$ satisfies the strong  annihilator condition (briefly s.a.c.). For details, see \cite{H, L, AM1, HJ}.

\begin{defn}
We say that an ideal $I$ of a ring $R$ satisfies the countable strongly annihilator condition (briefly c.s.a.c.) if for any countable subset $S$ of $I$, there exists an element $a\in I$ such that ${\rm Ann}(S)={\rm Ann}(a)$. 	
\end{defn}

\begin{prop}
Let $I$ be a weakly annihilated ideal satisfies the c.s.a.c., then $I$ is countable strongly annihilated.
\end{prop}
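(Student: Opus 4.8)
The plan is to reduce the countable annihilation requirement to the single-element weakly annihilated condition, using the c.s.a.c.\ hypothesis as the bridge: the c.s.a.c.\ lets me collapse an entire countable family into one element of $I$ whose annihilator is the same, and then the (single-element) weakly annihilated property acts on that one element. So the whole argument is essentially unwinding the two definitions and matching them up.

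Concretely, I would start by fixing an arbitrary countable subset $S=\{a_1,a_2,\cdots\}$ of $I$ and an element $b\in R\setminus I$; the goal is to produce $c\in R$ with $ca_n=0$ for every $n$ and $cb\neq 0$. First I would apply the c.s.a.c.\ hypothesis to the countable set $S$ to obtain an element $a\in I$ with ${\rm Ann}(S)={\rm Ann}(a)$. The key feature to record is that this $a$ lies in $I$, not merely in $R$, since that is exactly what the weakly annihilated hypothesis will need as input. Next, because $a\in I$ and $b\notin I$, the weakly annihilated property of $I$ supplies an element $c\in R$ with $ca=0$ and $cb\neq 0$. Finally, $ca=0$ says $c\in{\rm Ann}(a)={\rm Ann}(S)$, which gives $ca_n=0$ for all $n$, and together with $cb\neq 0$ this is precisely the $c$ required by the definition of countable strongly annihilated.

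I do not expect a genuine obstacle here; the entire content is bookkeeping between the two definitions, with c.s.a.c.\ serving exactly to replace the countable test-family by a single element of $I$ on which the weakly annihilated condition can be exercised. The only point deserving a moment's care is the verification that the element $a$ delivered by c.s.a.c.\ actually belongs to $I$ (so that it is an admissible input to the weakly annihilated hypothesis) and that the equality of annihilators is used in the right direction, namely $c\in{\rm Ann}(a)\Rightarrow c\in{\rm Ann}(S)$.
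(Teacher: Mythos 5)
Your proof is correct and follows exactly the same route as the paper: use c.s.a.c.\ to replace the countable family $S$ by a single element of $I$ with the same annihilator, then apply the weakly annihilated hypothesis to that element and $b$. The only difference is notational (the paper writes $c$ for the element of $I$ and $d$ for the annihilator), and your explicit attention to the fact that the c.s.a.c.\ element lies in $I$ is precisely the point the paper's definition is designed to supply.
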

\begin{proof}
Let  $S=\{a_n:n=1,\ldots\}$ be a countable subset of $I$ and $b\notin I$. There is $c\in I$ such that ${\rm Ann}(S)={\rm Ann}(c)$. By hypothesis,  $cd=0$ and $db\neq0$, for some $d\in R$. This implies that $dS=0$ and we are through. 
\end{proof}

The converse is not true, in general. See the next example.
\begin{exam}\label{e22}
Let $R=\frac{\Bbb Z_2[x,y]}{I}$,  $I=(x^2,y^2)$ and suppose that $\mathcal J=\frac{(x,y)}{(x^2,y^2)}$. In Example 3.13 of \cite{L} it is shown that $\mathcal J$ does not have c.s.a.c.. It is not hard to see that $I(xy)I(f)=0$, for every $I(f)\in \mathcal J$ and $I(xy)I(g)\neq0$, for every $I(g)\notin \mathcal J$. This shows that $\mathcal J$ is countable strongly annihilated. 
\end{exam}

\begin{prop}\label{p22}
Let $I$ be an ideal  of a Noetherian ring $R$. Then\\
a)  $I$ satisfies the c.s.a.c. if and only if satisfies the s.a.c.\\
b) $I$ is a strongly annihilated ideal if and only if countable strongly annihilated.

\end{prop}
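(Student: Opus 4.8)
The plan is to exploit the ascending chain condition to collapse every countable family to a finite one \emph{at the level of annihilators}, after which both equivalences become almost formal. The core observation I would establish first is the following reduction: if $R$ is Noetherian and $S=\{a_1,a_2,\cdots\}\subseteq I$ is countable, then there exists $N$ with ${\rm Ann}(S)={\rm Ann}(\{a_1,\cdots,a_N\})$. To prove it, consider the ascending chain of ideals $(a_1)\subseteq(a_1,a_2)\subseteq\cdots$; by ACC it stabilizes at some $N$, so the ideal $(S)$ generated by $S$ equals $(a_1,\cdots,a_N)$. Since an element annihilates a set precisely when it annihilates the ideal that set generates, ${\rm Ann}(S)={\rm Ann}\big((S)\big)={\rm Ann}\big((a_1,\cdots,a_N)\big)={\rm Ann}(\{a_1,\cdots,a_N\})$, as claimed.

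For part (a), the implication from c.s.a.c. to s.a.c. is immediate, since every finite subset of $I$ is in particular countable. For the converse, given a countable $S=\{a_1,a_2,\cdots\}\subseteq I$ I would invoke the reduction to get ${\rm Ann}(S)={\rm Ann}(\{a_1,\cdots,a_N\})$; the s.a.c. then furnishes an element $a\in I$ with ${\rm Ann}(\{a_1,\cdots,a_N\})={\rm Ann}(a)$, whence ${\rm Ann}(S)={\rm Ann}(a)$, which is exactly the c.s.a.c.

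For part (b), the forward direction (countable strongly annihilated implies strongly annihilated) holds in any ring and was already noted after the definition. For the converse, let $S=\{a_1,a_2,\cdots\}\subseteq I$ be countable and $b\in R\setminus I$. Using the reduction, pick $N$ with ${\rm Ann}(S)={\rm Ann}(\{a_1,\cdots,a_N\})$, and apply the strong annihilation of $I$ to the finite set $\{a_1,\cdots,a_N\}$ together with $b$ to obtain $c\in R$ with $ca_i=0$ for $i=1,\cdots,N$ and $cb\neq0$. Then $c\in{\rm Ann}(\{a_1,\cdots,a_N\})={\rm Ann}(S)$, so $ca_n=0$ for every $n$ while $cb\neq0$, giving countable strong annihilation.

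The only real content is the reduction step, and the main thing to get right there is that the relevant chain is \emph{ascending} (so that ACC, not the unavailable descending chain condition, applies) and that one passes through the generated ideal $(S)$; everything afterwards is a direct substitution. I would also be careful to read the ideal-level s.a.c. in the sense parallel to the c.s.a.c. definition, namely that for each finite $S\subseteq I$ the annihilating element may be taken inside $I$, so that the element produced in part (a) indeed lies in $I$ as required.
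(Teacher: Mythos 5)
Your proposal is correct and follows essentially the same route as the paper: both arguments use the ascending chain condition on $(a_1)\subseteq(a_1,a_2)\subseteq\cdots$ to reduce the countable set $S$ to a finite subset with the same annihilator, and then invoke the s.a.c. (resp.\ strong annihilation) on that finite subset. Your explicit isolation of the reduction ${\rm Ann}(S)={\rm Ann}(\{a_1,\cdots,a_N\})$ as a separate lemma is just a cleaner packaging of the paper's observation that $a_{n+1}=r_1a_1+\cdots+r_na_n$ forces the annihilators to agree.
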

\begin{proof}
$(a\Rightarrow )$	It is obvious.\\
$(a\Leftarrow )$
Let  $S=\{a_n:n=1,\ldots\}$ be a countable subset of $I$. There exists $n\in \Bbb N$ such that $(a_1,\ldots,a_n)=(a_1,\ldots,a_{n+1})=\cdots$. Now by hypothesis there is $c\in I$ such that ${\rm Ann}(\{a_1,\ldots,a_n\})={\rm Ann}(c)$. Since $a_{n+1}=r_1a_1+\cdots+r_na_n$, where $r_1,\ldots,r_n\in R$, we infer that ${\rm Ann}(S)={\rm Ann}(c)$ and this complete the proof. The proof of part (b) is similar to part (a).
\end{proof}	

Recall that a ring  in which ideals are totally ordered by inclusion is called a chained ring. By Proposition \ref{p22} and Proposition 3.3 of \cite{HL} the proof of the next result is clear.
\begin{cor}
Let $R$ be a Noetherian chained ring and $M$ be the maximal ideal of $R$. Then the following statements are equivalent.\\
a) Every ideal in $R$ is weakly annihilated.\\
b) Every ideal in $R$ is strongly annihilated.\\
c) Every ideal in $R$ is countable strongly annihilated.\\
d) $M$ consists of zerodivisors.

\end{cor}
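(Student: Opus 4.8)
The plan is to assemble the four-way equivalence from exactly the two ingredients the author flags: the coincidence of strongly annihilated and countable strongly annihilated ideals in a Noetherian ring (Proposition \ref{p22}(b)), and the Heinzer--Lantz characterization of chained rings in which every ideal is weakly annihilated (Proposition 3.3 of \cite{HL}). The idea is that Proposition 3.3 of \cite{HL} supplies the equivalences among the \emph{weakly annihilated}, \emph{strongly annihilated}, and \emph{zerodivisor} conditions, i.e.\ (a) $\Leftrightarrow$ (b) $\Leftrightarrow$ (d), while Proposition \ref{p22}(b) inserts the countable variant, i.e.\ (b) $\Leftrightarrow$ (c). Chaining the two biconditionals then gives (a) $\Leftrightarrow$ (b) $\Leftrightarrow$ (c) $\Leftrightarrow$ (d).

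First I would record the implications that hold in \emph{any} ring and require no hypotheses on $R$: by the remarks following the definitions, every countable strongly annihilated ideal is strongly annihilated and every strongly annihilated ideal is weakly annihilated, so applying these uniformly to all ideals yields (c) $\Rightarrow$ (b) $\Rightarrow$ (a). This already fixes one direction of the cycle and means the only work is to supply the reverse arrows and to splice in (d).

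Next I would invoke Proposition \ref{p22}(b). Since $R$ is Noetherian, an ideal is strongly annihilated if and only if it is countable strongly annihilated; reading this across every ideal of $R$ gives (b) $\Leftrightarrow$ (c) outright, so it suffices now to produce (a) $\Rightarrow$ (b) and to connect (d). Both are delivered by Proposition 3.3 of \cite{HL}: applied to the chained ring $R$, it identifies ``every ideal weakly annihilated'' with ``every ideal strongly annihilated'' and with ``$M$ consists of zerodivisors,'' furnishing (a) $\Leftrightarrow$ (b) $\Leftrightarrow$ (d). Concatenating this with the equivalence (b) $\Leftrightarrow$ (c) from the previous step closes the loop and establishes that all four statements are equivalent.

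The main obstacle is not computational but bookkeeping: I must confirm the precise content and hypotheses of Proposition 3.3 of \cite{HL} and match them to ours. If that proposition already bundles (a), (b), and (d) for chained rings, the argument is immediate as above. If instead it yields only (a) $\Leftrightarrow$ (d), then I would recover the full cycle by combining the always-valid chain (c) $\Rightarrow$ (b) $\Rightarrow$ (a) with Proposition \ref{p22}(b) to promote (b) to (c), using (a) $\Leftrightarrow$ (d) merely to attach the zerodivisor condition. A secondary care-point is verifying that the Noetherian hypothesis needed by Proposition \ref{p22}(b) and the chained hypothesis needed by Proposition 3.3 of \cite{HL} are both genuinely in force, which they are since $R$ is assumed Noetherian and chained.
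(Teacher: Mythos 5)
Your proposal is correct and follows exactly the route the paper takes: the paper's entire justification is that the corollary is ``clear'' from Proposition \ref{p22} together with Proposition 3.3 of \cite{HL}, which is precisely your decomposition into the Noetherian equivalence (b) $\Leftrightarrow$ (c) and the chained-ring equivalence of (a), (b), and (d). Your extra bookkeeping of the always-valid implications (c) $\Rightarrow$ (b) $\Rightarrow$ (a) and the fallback in case \cite{HL} gives only part of the cycle is more careful than the paper, but adds nothing essentially different.
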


A countable strongly annihilated ideal need not be an annihilator ideal. See Example \ref{e2}. 

\begin{prop}
A countable generated ideal $I$ of  a ring $R$ is  countable strongly annihilated ideal if and only if it is a annihilator ideal.
\end{prop}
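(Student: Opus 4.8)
The plan is to prove both implications through the classical characterization that an ideal $I$ is an annihilator ideal precisely when $I={\rm Ann}({\rm Ann}(I))$, combined with the fact already recorded earlier in this section that ${\rm Ann}(J)$ is countable strongly annihilated for every ideal $J$ of $R$. The inclusion $I\subseteq{\rm Ann}({\rm Ann}(I))$ holds for any ideal, so the whole content of the theorem reduces to the reverse inclusion in the nontrivial direction.

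For the easy implication, suppose $I$ is an annihilator ideal, say $I={\rm Ann}(J)$ for some ideal $J$. Then $I$ is countable strongly annihilated immediately by the cited remark applied to $J$. I note that this direction makes no use of the countable generation hypothesis; it holds for every annihilator ideal.

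For the converse, assume $I$ is countably generated and countable strongly annihilated, and fix a countable generating set $S=\{a_1,a_2,\cdots\}$ of $I$, which is in particular a countable subset of $I$. It suffices to show ${\rm Ann}({\rm Ann}(I))\subseteq I$. I would argue by contradiction: if some $b\in{\rm Ann}({\rm Ann}(I))$ satisfies $b\notin I$, then applying the countable strongly annihilated property to the countable set $S$ and to $b\in R\setminus I$ yields an element $c\in R$ with $ca_n=0$ for every $n$ and $cb\neq 0$.

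The crux of the argument is the observation that $c$ kills every generator $a_n$ of $I$ and therefore kills all of $I$, so $c\in{\rm Ann}(I)$; this is the one and only place where the countably generated hypothesis is used, since it lets a single $c$ annihilating a countable generating set serve as an annihilator of the whole ideal. Then $b\in{\rm Ann}({\rm Ann}(I))$ forces $cb=bc=0$, contradicting $cb\neq 0$. Hence $I={\rm Ann}({\rm Ann}(I))$ and $I$ is an annihilator ideal. I do not anticipate a genuine obstacle here; the argument is short, and its only subtle point is precisely that converting ``annihilating the generators'' into ``annihilating the ideal'' is what turns the countable strongly annihilated hypothesis into membership of $c$ in ${\rm Ann}(I)$.
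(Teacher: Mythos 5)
Your proof is correct and follows essentially the same route as the paper: the forward direction in both cases applies the countable strongly annihilated property to a countable generating set to produce, for each $b\notin I$, an element $c$ annihilating all of $I$ but not $b$. The only cosmetic difference is that you package the witnesses via the double-annihilator criterion $I={\rm Ann}({\rm Ann}(I))$, whereas the paper takes $J$ to be the ideal generated by the witnesses $c_b$ and verifies $I={\rm Ann}(J)$ directly.
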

\begin{proof}
$(\Leftarrow)$ It is clear.	

$(\Rightarrow)$	Let $I=(a_1,a_2,\ldots)$. If $b\notin I$, then there is $c_b\in R$ such that $c_ba_n=0$, for any $n$, and $c_bb\neq0$. Suppose that $J$ is the ideal generated by $A=\{c_b:b\notin I\}$. It is obvious that $I\subseteq {\rm Ann}(J)$. Now let $t\in{\rm Ann}(J)$. If $t\notin I$, then there is $c_t\in R$ such that $c_ta_n=0$ and $c_tt\neq0$. Consequently, $c_t\in J$ and this implies that $c_tt=0$, which is not true. Hence, ${\rm Ann}(J)\subseteq I$ and thus ${\rm Ann}(J)=I$.
\end{proof}
From \cite{GM} recall that an ideal $I$ of a ring $R$ is called universally contracted if for any extension ring $T$ of $R$, there is an ideal $J\subseteq T$ with $I=J\cap R$, this is equivalent to the condition that $IT\cap R = I$ for each unitary extension ring $T$ of $R$. 
The proof of the next result is similar to the proof of Proposition 3.3 of \cite{GM}.

\begin{prop}
Let $I$ be an ideal of the ring $R$ and consider the following conditions.\\
a) $I$ is a	countable strongly annihilated ideal in $R$.\\
b) $I[x_1,x_2,\ldots]$ is universally contracted for each countable set $\{x_1,x_2,\ldots\}$ of indeterminates over $R$.\\
c) $I$ is universally contracted.

Then $(a\Rightarrow b \Rightarrow c)$.		
\end{prop}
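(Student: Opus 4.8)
The plan is to establish the two implications $(a \Rightarrow b)$ and $(b \Rightarrow c)$ separately, modeling the argument on Proposition 3.3 of \cite{GM} as the paper suggests. The implication $(b \Rightarrow c)$ should be the easy direction: taking the empty set of indeterminates (or observing that $I = I[x_1,x_2,\cdots] \cap R$ whenever the larger ideal is universally contracted) reduces condition (b) to condition (c) by a straightforward contraction argument. More carefully, I would show that universal contraction of $I[x_1,x_2,\cdots]$ forces universal contraction of $I$: given any extension ring $T$ of $R$, I form the polynomial extension $T[x_1,x_2,\cdots]$, use the hypothesis to contract $I[x_1,x_2,\cdots]$ back to itself from this larger ring, and then intersect with $R$ to recover $I$. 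The bookkeeping here is routine.

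The substance lies in $(a \Rightarrow b)$. I would fix a countable set of indeterminates $\{x_1,x_2,\cdots\}$ and a unitary extension ring $T$ of the polynomial ring $R[x_1,x_2,\cdots]$, and aim to prove $I[x_1,x_2,\cdots]\,T \cap R[x_1,x_2,\cdots] = I[x_1,x_2,\cdots]$. The inclusion $\supseteq$ is automatic, so the work is the reverse inclusion. I would take an element $g \in I[x_1,x_2,\cdots]\,T \cap R[x_1,x_2,\cdots]$ and write $g = \sum_{j} t_j h_j$ with $t_j \in T$ and $h_j \in I[x_1,x_2,\cdots]$; collecting all coefficients appearing in the finitely many $h_j$ gives a countable (in fact finite) subset $S = \{a_1, a_2, \cdots\}$ of $I$. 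Toward a contradiction, I would suppose that some coefficient $b$ of $g$ lies in $R \setminus I$. Applying the countable strongly annihilated hypothesis (condition (a)) to the countable set $S$ and this $b$, I obtain $c \in R$ with $c\,a_n = 0$ for all $n$ but $c\,b \neq 0$. Multiplying the relation $g = \sum_j t_j h_j$ by $c$ annihilates the right-hand side entirely (since $c$ kills every coefficient of every $h_j$), forcing $c\,g = 0$ in $T$; but $c\,g$ has $c\,b \neq 0$ as one of its coefficients in $R[x_1,x_2,\cdots] \subseteq T$, a contradiction. Hence every coefficient of $g$ lies in $I$, which is exactly $g \in I[x_1,x_2,\cdots]$.

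The main obstacle I anticipate is the careful handling of how the annihilator $c \in R$ interacts with the polynomial structure over the countably many indeterminates, specifically ensuring that $c\,g$ is genuinely nonzero in $T$ rather than merely in $R[x_1,x_2,\cdots]$. The key point to verify is that multiplication by $c \in R \subseteq T$ respects the coefficient structure, so that a nonzero coefficient $c\,b$ of $c\,g$ in $R[x_1,x_2,\cdots]$ remains nonzero after the inclusion into $T$ — this is where I must use that $R[x_1,x_2,\cdots]$ embeds in $T$ as a subring, so distinct polynomials stay distinct. It is precisely the passage to countably many indeterminates (as opposed to the finitely many in the original Gilmer–McAdam result) that requires the strengthening from strongly annihilated to \emph{countable} strongly annihilated, and the argument only goes through because the countable subset $S$ of coefficients can be simultaneously annihilated by a single $c$. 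I would also remark that the reverse implications $(c \Rightarrow b)$ and $(b \Rightarrow a)$ are not claimed, mirroring the situation in \cite{GM}, so no effort is spent there.
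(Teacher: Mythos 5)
Your proof is correct and follows the route the paper intends: the paper gives no argument of its own, saying only that the proof is similar to Proposition 3.3 of Gilmer--McAdam, and your coefficient-annihilation argument for $(a\Rightarrow b)$ together with the passage from an extension $T$ of $R$ to the extension $T[x_1,x_2,\cdots]$ of $R[x_1,x_2,\cdots]$ for $(b\Rightarrow c)$ is exactly that adaptation. One small caveat: as you yourself note parenthetically, the coefficient set $S$ extracted from the finitely many $h_j$ is actually finite (a polynomial, even in infinitely many indeterminates, has only finitely many nonzero coefficients), so your argument really only invokes strong annihilation; your closing claim that the countable hypothesis is \emph{required} for countably many indeterminates is not substantiated by the proof, though the stated implication of course still holds since countable strong annihilation is the stronger assumption.
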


\section{On a question of Gilmer and McAdam}
From \cite{GM} recall that an ideal $I$ of a ring $R$ is called universally proper if each finite subset of $I$ is annihilated by a nonzero element of $R$. Let $I$ be an ideal of $R$. If ${\rm Ann}(I)\neq (0)$, then $I$ is  universally proper, see  Proposition 3.2 of \cite{GM}. By the same theorem, if a maximal ideal $M$ of $R$ is universally proper then $M$ is contracted from each simple ring extension of $R$. 

In  Question 4 of \cite{GM}, Gilmer and McAdam raised the question: If $I$ is an ideal of $R$ such that $I$ is contracted from each simple ring extension of $R$, does it follow that $I$ is universally contracted? We found a negative to this question. This example is given by A. Azarang.

\begin{exam}
Let $T=\frac{\Bbb Z_2[x,y,u,v]}{(x^2,y^2, ux+vy-1)}$ and $I=(x^2,y^2, ux+vy-1)$. Suppose that $\alpha=I(x)$, $\beta=I(y)$, $\gamma=I(u)$ and $\lambda=I(v)$. Then $T=\Bbb Z_2[\alpha, \beta,\gamma,\lambda]$, where $\alpha^2=\beta^2=0$ and $\alpha\gamma+\beta\lambda=1$. Now assume that $R=\Bbb Z_2[\alpha,\beta]$. Clearly, $T$ is a unitary extension ring of $R$. Note that $M=(\alpha, \beta)$ is a maximal ideal of $R$ and in Example \ref{e22} we show that ${\rm Ann}(M)\neq(0)$. Hence, $M$ is a universally proper ideal of $R$ and so it is contracted from each simple ring extension of $R$. However, one can easily see that $MT=T$. This conclude that  $M$ is not a contracted ideal from $T$ and so we are through. 
\end{exam}

\section{Applications to $C(X)$}

In this section we see that the  countable strongly annihilated ideals in $C(X)$ are closely related to the real maximal ideals. As a matter of fact, every $z^\circ$-ideal is a countable strongly annihilated ideal if and only if it is a real maximal ideal. An ideal $I$  in a ring $R$ is said to be  strongly divisible  if foe every $a_1,a_2,\ldots$ in $I$ there exists $c\in I$ and $b_1,b_2,\ldots$ in $R$ such that $a_i=cb_i$, for $i=1,2,\ldots$. We begin by the following lemma.

\begin{lem}\label{l1}
 Every  countable strongly annihilated ideal in $C(X)$ is strongly divisible.
\end{lem}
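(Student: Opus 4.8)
The plan is to prove the contrapositive-free direct implication: starting from an arbitrary countable family $\{a_1,a_2,\cdots\}$ in a countable strongly annihilated ideal $I$ of $C(X)$, I must produce a single $c\in I$ together with functions $b_i\in C(X)$ satisfying $a_i=cb_i$ for all $i$. The natural candidate for $c$ is a function whose zero-set is exactly the intersection $\bigcap_i Z(a_i)$, since divisibility $a_i=cb_i$ in $C(X)$ morally requires $Z(c)\subseteq Z(a_i)$ for every $i$. So first I would normalize the $a_i$ (replacing each by $|a_i|\wedge 1$ or $a_i^2/(1+a_i^2)$, which lie in $I$ and have the same zero-sets) and then set $c=\sum_{n=1}^\infty 2^{-n}\,\dfrac{a_n^2}{1+a_n^2}$, the standard uniformly convergent series trick from $C(X)$ theory; this $c$ is continuous, bounded, and satisfies $Z(c)=\bigcap_n Z(a_n)$.

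The crucial point is that $c$ must actually belong to $I$, not merely to $C(X)$. This is where I would invoke the hypothesis that $I$ is countable strongly annihilated, via the already-established fact (Proposition in Section 2) that such an ideal is an $\aleph_0$-$z^\circ$-ideal in the reduced ring $C(X)$. Concretely: I claim $P_c\subseteq I$ where $c$ is built above, or more directly that $c\in I$ because $c$ is ``controlled'' by the countable set $\{a_n\}$. In the language of $C(X)$, since $Z^\circ(c)\supseteq\bigcap_n Z^\circ(a_n)$ and $c$ vanishes precisely where all the $a_n$ do, the $\aleph_0$-$z^\circ$-ideal property should force $c\in I$ once I check the minimal-prime condition: any minimal prime containing $\{a_n\}$ contains $c$, hence $c\in P_{\{a_n\}}\subseteq I$. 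I would spell this out by showing $c\in P_S$ for $S=\{a_n\}$ using that $cP\subseteq$ the radical generated by the $a_n$, exploiting $Z(c)=\bigcap_n Z(a_n)$.

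Finally, having secured $c\in I$ with $Z(c)=\bigcap_n Z(a_n)$, I must produce the quotients $b_i$. Here I set $b_i=a_i/c$ on $coz(c)$ and $b_i=0$ on $Z(c)$; the difficulty is continuity at the boundary. The clean way is to observe that $c\ge 2^{-i}\,a_i^2/(1+a_i^2)$ pointwise, so $|a_i|\le (1+a_i^2)\,2^{i}c/|a_i|$ where $a_i\ne0$, giving an explicit bound $|b_i|\le 2^{i}(1+a_i^2)/|a_i|$ only away from zeros — which is not yet a global continuity argument. I expect this continuity of the quotient $b_i=a_i/c$ to be the main obstacle. I would resolve it by choosing the normalization so that near $Z(a_i)$ the numerator is dominated by the corresponding summand of $c$: since the $i$-th term $2^{-i}a_i^2/(1+a_i^2)$ of $c$ vanishes to the same order as $a_i^2$, the ratio $a_i/c$ extends continuously by zero across $Z(a_i)$, and on $Z(c)\setminus Z(a_i)=\emptyset$ there is nothing to check because $Z(c)\subseteq Z(a_i)$. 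Thus $b_i\in C(X)$ and $a_i=cb_i$, completing the proof that $I$ is strongly divisible.
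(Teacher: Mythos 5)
There is a genuine gap in your final step, the construction of the quotients $b_i$. Zero-set containment $Z(c)\subseteq Z(a_i)$ is nowhere near sufficient for $c$ to divide $a_i$ in $C(X)$, and your particular $c=\sum_n 2^{-n}a_n^2/(1+a_n^2)$ makes matters worse, not better: its $i$-th summand vanishes to the order of $a_i^{2}$, i.e.\ \emph{faster} than $a_i$, so the pointwise bound $c\ge 2^{-i}a_i^2/(1+a_i^2)$ only yields $|a_i/c|\le 2^{i}(1+a_i^2)/|a_i|$, which blows up as one approaches $Z(a_i)$. Concretely, on $X=\mathbb{R}$ with $a_n=x$ for all $n$ one gets $c=x^2/(1+x^2)$ and $a_1/c=(1+x^2)/x$, which has no continuous extension across $0$; your claim that ``the ratio $a_i/c$ extends continuously by zero across $Z(a_i)$'' is false. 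To make a direct construction work you would have to make $c$ vanish \emph{more slowly} than the $a_i$, e.g.\ $c=\sum_n 2^{-n}\bigl(|a_n|\wedge 1\bigr)^{1/3}$, so that $|a_i|\le 8^{i}c^{3}$ near $Z(c)$ and the divisibility criterion 1D.1 of Gillman--Jerison applies. The paper sidesteps the construction entirely: it quotes Lemma 4.1 of Azarpanah's paper [Az], which says that a $z$-ideal is strongly divisible if and only if $Z[I]$ is closed under countable intersection, so the only thing left to verify is that $\bigcap_n Z(a_n)\in Z[I]$.

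That remaining verification is the part you essentially have right, and it coincides with the paper's argument: form $f$ with $Z(f)=\bigcap_n Z(a_n)$, so that ${\rm Ann}(f)=\bigcap_n{\rm Ann}(a_n)$; if $f\notin I$, countable strong annihilation produces $g$ with $ga_n=0$ for all $n$ but $gf\ne 0$, contradicting ${\rm Ann}(f)=\bigcap_n{\rm Ann}(a_n)$; since $I$ is a $z$-ideal, $f\in I$ and hence $\bigcap_n Z(a_n)=Z(f)\in Z[I]$. I would, however, drop your alternative route through $c\in P_S$ and the $\aleph_0$-$z^\circ$-ideal property: to get $c\in P_S$ you need $\bigcap_n{\rm Ann}(a_n)\not\subseteq Q$ for every minimal prime $Q$ containing $S$, and that does not follow from knowing ${\rm Ann}(a_n)\not\subseteq Q$ for each $n$ separately. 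The direct annihilator argument is both simpler and airtight.
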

\begin{proof}
Suppose that $I$ is a countable strongly annihilated ideal. Since $I$ is a $z$-ideal, it is enough to show that $Z[I]$ is closed under countable intersection, by Lemma 4.1 of \cite{Az}. Hence, suppose that  
$\{Z(f_n): n\in\Bbb N\}$ be a subfamily of $Z[I]$. Since $I$ is a $z$-ideal, then $f_n\in I$, for any $n\in\Bbb N$ and we can also assume that $|f_n|\leq 1$. Define $f=\sum_{n=1}^\infty\frac{|f(x)|}{2^n}$. Then $f\in C(X)$ and ${\rm Ann}(f)=\bigcap_{n=1}^\infty{\rm Ann}(f_n)$. If $f\notin I$, then by hypothesis there is $g\in C(X)$ such that $gf_n=0$, for any $n\in \Bbb N$ and $gf\neq0$ which is a contradiction. 
\end{proof}
The converse of the above lemma is not true. For example, we consider the maximal ideal $M_0$ in $C(\Bbb R)$. Then $M_0$ is a real maximal and hence by Corollary 4.2 of \cite{Az} it is a strongly divisible ideal which is not a $z^\circ$-ideal. Let $I$ be a countable generated $z$-ideal and $Z[I]$ is closed under countable intersection, then it is well-known that $I=(e)$ for an idempotent $e$, hence $I$ is a countable strongly annihilated ideal. 
\begin{prop}
A space $X$ is an almost $P$-space if and only if countable strongly annihilated ideals and strongly divisible $z$-ideals coincide.
\end{prop}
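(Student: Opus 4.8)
The plan is to prove both implications, noting first that one inclusion is automatic, so that all the dependence on almost-$P$-ness lives in the other. Indeed, by the earlier proposition every countable strongly annihilated ideal of the reduced ring $C(X)$ is an $\aleph_0$-$z^\circ$-ideal, hence a $z^\circ$-ideal, and therefore a $z$-ideal (in $C(X)$, $Z(f)=Z(g)$ forces $Z^\circ(f)=Z^\circ(g)$); combined with Lemma \ref{l1} this shows, with no hypothesis on $X$, that every countable strongly annihilated ideal is already a strongly divisible $z$-ideal. Consequently the two families coincide exactly when every strongly divisible $z$-ideal is countable strongly annihilated, and the equivalence I must establish reduces to: $X$ is almost $P$ if and only if this reverse inclusion holds.

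For the direction ``$X$ almost $P$ $\Rightarrow$ reverse inclusion'', I would let $I$ be a strongly divisible $z$-ideal, fix a countable set $\{f_n\}\subseteq I$ and some $g\notin I$. Strong divisibility supplies $c\in I$ and $b_n\in C(X)$ with $f_n=cb_n$, so any $h$ with $hc=0$ automatically kills every $f_n$; hence it suffices to produce $h$ with $hc=0$ and $hg\neq 0$, i.e.\ a nonempty cozero set $W\subseteq Z^\circ(c)$ meeting $coz(g)$, which exists precisely when $Z^\circ(c)\cap coz(g)\neq\emptyset$. The non-interior version $Z(c)\cap coz(g)\neq\emptyset$ comes for free: since $I$ is a $z$-ideal with $c\in I$, the inclusion $Z(c)\subseteq Z(g)$ would force $g\in I$, contradicting $g\notin I$, so $Z(c)\not\subseteq Z(g)$.

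The heart of the argument, and the step I expect to be the main obstacle, is upgrading $Z(c)\cap coz(g)\neq\emptyset$ to $Z^\circ(c)\cap coz(g)\neq\emptyset$ using only that $X$ is an almost $P$-space, which globally guarantees merely that nonempty zero-sets have nonempty interior, with no control over \emph{where} that interior sits. I plan to localize: choose $x_0\in Z(c)\cap coz(g)$ and, by complete regularity, a $k\in C(X)$ with $0\le k\le 1$, $k(x_0)=1$ and $k\equiv 0$ off $coz(g)$, and set $w=|c|+(1-k)$. Then $Z(w)=Z(c)\cap k^{-1}(1)$ is nonempty (it contains $x_0$) and contained in $Z(c)\cap coz(g)$; almost $P$-ness yields $Z^\circ(w)\neq\emptyset$, and since $coz(g)$ is open, $\emptyset\neq Z^\circ(w)\subseteq {\rm int}\big(Z(c)\cap coz(g)\big)=Z^\circ(c)\cap coz(g)$. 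Taking a cozero neighbourhood $W\subseteq Z^\circ(c)$ of a point of this intersection and $h$ with $coz(h)=W$ then completes this direction.

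For the converse I would argue by contraposition. If $X$ is not almost $P$, there is $f$ with $Z(f)\neq\emptyset$ but $Z^\circ(f)=\emptyset$, so $f$ is a nonzero regular element: from $fg=0$ one gets $coz(g)\subseteq Z^\circ(f)=\emptyset$, whence $g=0$. Choosing any $x_0\in Z(f)$, the fixed maximal ideal $M_{x_0}$ is a $z$-ideal whose zero-set family is closed under countable intersection, hence a strongly divisible $z$-ideal (equivalently, $M_{x_0}$ is a real maximal ideal, so strongly divisible by Corollary 4.2 of \cite{Az}). But $f\in M_{x_0}$ is a nonzero regular element, so $M_{x_0}$ is not even weakly annihilated, and in particular not countable strongly annihilated. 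This exhibits a strongly divisible $z$-ideal lying outside the countable strongly annihilated family, so the two families do not coincide, which establishes the contrapositive and finishes the proof.
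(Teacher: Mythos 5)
Your proof is correct, and its skeleton matches the paper's: both directions reduce, via strong divisibility, to annihilating the single divisor $c$ of the countable family, and both converses hinge on $M_x$ being a strongly divisible (real maximal) $z$-ideal that fails to be countable strongly annihilated at a non-almost-$P$-point. Where you diverge is in how the key step of the forward direction is discharged. The paper gets the annihilator in one line by quoting machinery: in an almost $P$-space every $z$-ideal is a $z^\circ$-ideal, and $z^\circ$-ideals of a reduced ring are weakly annihilated, so $\mathrm{Ann}(c)\nsubseteq\mathrm{Ann}(g)$ immediately. You instead re-derive this fact topologically from scratch: the auxiliary function $w=|c|+(1-k)$ localizes the zero-set of $c$ inside $coz(g)$, almost $P$-ness gives $Z^\circ(w)\neq\emptyset$, and openness of $coz(g)$ pushes the interior into $Z^\circ(c)\cap coz(g)$, from which the annihilator is built on a cozero neighbourhood. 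Your construction is sound (the inclusion $Z(w)\subseteq Z(c)\cap coz(g)$ and the identity $\mathrm{int}(Z(c)\cap coz(g))=Z^\circ(c)\cap coz(g)$ both check out) and has the virtue of being self-contained, essentially reproving the ``$z$-ideal $=$ $z^\circ$-ideal in almost $P$-spaces'' equivalence in the special case needed; the paper's route is shorter but leans on two external characterizations. Your converse by contraposition is the same argument as the paper's direct one, with the useful extra observation made explicit that $f$ with $Z^\circ(f)=\emptyset$ is a regular element, so $M_{x_0}$ is not even weakly annihilated.
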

\begin{proof}
$(\Rightarrow)$ Let $I$ be a strongly divisible $z$-ideal and suppose that $f_1,f_2,\ldots$ in $I$ and $g\notin I$. By hypothesis, there exists $h\in I$ and $g_1,g_2,\ldots$ in $C(X)$ such that $f_i=hg_i$, for any $i=1,2,\ldots$. Since $I$ is a $z^\circ$-ideal, we infer that ${\rm Ann}(h)\nsubseteq {\rm Ann}(g)$ and hence there is $k\in {\rm Ann}(h)$ but $k\notin {\rm Ann}(g)$. Therefore $kg\neq 0$ and $kf_i=0$, for any $i=1,2,\ldots$. This shows that $I$ is a countable strongly annihilated ideal. Now using the above lemma we are done. 	

$(\Leftarrow)$ For any $x\in X$, the ideal $M_x$ is a real maximal $z$-ideal and by hypothesis is a countable strongly annihilated ideal. Hence, $M_x$ is a $z^\circ$-ideal and therefore $X$ is an almost $P$-space.  	
\end{proof}

Recall that every minimal prime ideal is a $\lambda-z^\circ$-ideal, where $\lambda\leq|P|$, but 
need not be countable strongly annihilated. See the next example.

\begin{exam}\label{e1}
We consider the ideal $O_\sigma$ in $C(\Sigma)$, where $\Sigma$ is the space of 4M in \cite{GJ}. By 4M.4, $O_\sigma$ is a minimal prime $z$-ideal. We claim that $O_\sigma$ is not  countable strongly annihilated. By 4M.1, there is $f\in C(\Sigma)$ such that $Z(f)=\{\sigma\}$. If $n\in\Bbb N$, then $n\notin Z(f)$ and hence there are $g_n, h_n\in C(\Sigma)$ such that $n\in Z^\circ(g_n)$ and $Z(f)\subseteq Z^\circ(h_n)$ with $Z(g_n)\cap Z(h_n)=\emptyset$, for every $n\in \Bbb N$. Note that $h_n\in O_\sigma$, for every $n$. Suppose that $Z(k)=\bigcap_{n=1}^\infty Z(h_n)$, where $k\in C(\Sigma)$. It is sufficient to show that $k\notin O_\sigma$. Otherwise, $\sigma\in Z^\circ(k)$ implies that there is a subset $\emptyset\neq U\subseteq \Bbb N$, such that $U\cup\{\sigma\}\subseteq\bigcap_{n=1}^\infty Z(h_n)$. Hence, there is a $i_0\in \Bbb N$ such that $i_0\in Z(h_n)$, for any $n$. This conclude that $Z(g_{i_0})\cap Z(h_{i_0})\neq\emptyset$ which is not true. Therefore, by Lemma \ref{l1}, $O_\sigma$ is not a countable strongly annihilated ideal.  
\end{exam}

\begin{prop}\label{p1}
A maximal ideal  $M$ of $C(X)$ is countable strongly annihilated if and only if it is a $z^\circ$-ideal and real maximal ideal.
\end{prop}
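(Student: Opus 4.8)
The plan is to establish the two implications separately, drawing on Lemma~\ref{l1}, the earlier proposition that a countable strongly annihilated ideal of a reduced ring is an $\aleph_0$-$z^\circ$-ideal, the reduced-ring description $P_h=\{t:{\rm Ann}(h)\subseteq {\rm Ann}(t)\}$, Corollary~4.2 of \cite{Az} (real maximal ideals are strongly divisible), and the classical fact that a maximal ideal of $C(X)$ is real exactly when its $z$-ultrafilter $Z[M]$ has the countable intersection property.

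For the forward implication, assume $M$ is countable strongly annihilated. Because $C(X)$ is reduced, the quoted proposition makes $M$ an $\aleph_0$-$z^\circ$-ideal, and specializing to singletons $S=\{a\}$ gives $P_a\subseteq M$ for each $a\in M$, so $M$ is a $z^\circ$-ideal. For realness I would argue by contradiction: if $M$ were hyperreal then $Z[M]$ would fail the countable intersection property, producing $f_1,f_2,\ldots\in M$ with $\bigcap_n Z(f_n)=\emptyset$. Picking any $g\notin M$, the countable strongly annihilated hypothesis yields $c\in C(X)$ with $cf_n=0$ for all $n$ and $cg\neq 0$; but $cf_n=0$ forces $coz(c)\subseteq Z(f_n)$ for every $n$, hence $coz(c)\subseteq\bigcap_n Z(f_n)=\emptyset$, so $c=0$ and $cg=0$, a contradiction. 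Therefore $M$ is real.

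For the converse, suppose $M$ is a real maximal $z^\circ$-ideal, let $\{f_1,f_2,\ldots\}\subseteq M$ be countable and let $g\notin M$. Since $M$ is real it is strongly divisible by Corollary~4.2 of \cite{Az}, so there are $h\in M$ and $b_1,b_2,\ldots\in C(X)$ with $f_n=hb_n$ for all $n$. Using that $M$ is a $z^\circ$-ideal together with the reduced-ring identity $P_h=\{t:{\rm Ann}(h)\subseteq {\rm Ann}(t)\}$, the inclusion ${\rm Ann}(h)\subseteq {\rm Ann}(g)$ would give $g\in P_h\subseteq M$, contradicting $g\notin M$; hence ${\rm Ann}(h)\nsubseteq {\rm Ann}(g)$. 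Choosing $c\in {\rm Ann}(h)$ with $cg\neq 0$ then gives $cf_n=chb_n=0$ for all $n$ while $cg\neq 0$, which is precisely the countable strongly annihilated condition. This is the same device as in the forward half of the almost $P$-space proposition, except that the $z^\circ$-property is now assumed directly rather than being extracted from almost $P$-ness.

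The routine steps are the factorization $f_n=hb_n$ and the extraction of $c$ from the strict inclusion of annihilators. The step I expect to carry the real weight is the realness half of the forward implication: countable strong annihilation is by definition a statement about countable annihilators and says nothing overt about the residue field $C(X)/M$, so the work lies in importing the countable-intersection characterization of real maximal ideals (equivalently, of the points of $\upsilon X$) to connect the two.
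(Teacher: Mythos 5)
Your proof is correct, and both implications ultimately rest on the same two pillars as the paper's proof --- Theorem 5.14 of \cite{GJ} and the reduced-ring dictionary between $z^\circ$-ideals and annihilator conditions --- but the devices you use are noticeably different. In the forward direction the paper just cites Lemma \ref{l1} to get that $Z[M]$ is closed under countable intersection and then invokes Theorem 5.14; your contradiction argument (a countable family in $M$ with $\bigcap_n Z(f_n)=\emptyset$ forces the annihilating element $c$ to satisfy $coz(c)=\emptyset$, hence $c=0$) is a direct, self-contained verification of the countable intersection property that bypasses the $\sum|f_n(x)|/2^n$ series hidden inside Lemma \ref{l1}; you also make the $z^\circ$ half explicit via the $\aleph_0$-$z^\circ$ proposition, which the paper leaves tacit. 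In the converse the paper builds $f=\sum_{n}|f_n(x)|/2^n$, uses realness to place $f\in M$ (since $Z(f)=\bigcap_n Z(f_n)\in Z[M]$), and applies weak annihilation to that single element; you instead extract a common divisor $h\in M$ of the $f_n$ from Corollary 4.2 of \cite{Az} (real maximal ideals are strongly divisible) and then use $P_h=\{t:{\rm Ann}(h)\subseteq{\rm Ann}(t)\}$ together with $P_h\subseteq M$. The two auxiliary elements play dual roles --- a ``sum'' whose annihilator is $\bigcap_n{\rm Ann}(f_n)$ versus a ``gcd'' killed by anything annihilating it --- and the only cost of your route is that it leans on an external corollary whose own proof is essentially the paper's series construction, whereas the paper's argument is self-contained; in exchange your converse reuses verbatim the mechanism of the almost $P$-space proposition, which makes the parallel between the two results transparent.
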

\begin{proof}
$(\Rightarrow)$ Since $M$ is a $z$-ideal, then similar to the proof of  Lemma \ref{l1}, $Z[M]$ is closed under countable intersection. Now  by Theorem 5.14 of \cite{GJ}, $M$ is real.

$(\Leftarrow)$ Let $f_n\in M$, for any $n\in\Bbb N$ and $g\notin M$.  Define $f(x)=\sum_{n=1}^\infty\frac{|f(x)|}{2^n}$. Then $f\in C(X)$ and ${\rm Ann}(f)=\bigcap_{n=1}^\infty{\rm Ann}(f_n)$. Since $M$ is real, by Theorem 5.14 of \cite{GJ}, we infer that $f\in M$. Now by hypothesis, there is $h\in C(X)$ such that $hf=0$ and $hg\neq0$. Hence, $hf_n=0$, for any $n\in \Bbb N$ and we are done. 	
\end{proof}
\begin{exam}\label{e2}
We consider the ideal $M_s$ of the space $S$ of 4N in \cite{GJ}. By Proposition \ref{p1}, $M_s$ is countable strongly annihilated. Now let there is a nonzero ideal $I$ in $C(S)$ such that $M_s={\rm Ann}(I)$. Assume that $0\neq f\in I$. Let $s\neq x_\alpha\in S$ an arbitrary element of $S$. We define $f_{\alpha}(x)=0$ if $x\neq x_\alpha$ and $f_{\alpha}(x_\alpha)=1$. Then $f_\alpha\in C(S)$ and $s\in Z(f_\alpha)$ implies that $f_\alpha\in M_s$, for any $\alpha$. Hence $ff_\alpha=0$, for any $\alpha$. But $f_{\alpha}(x_\alpha)=1$ conclude that $f(x_\alpha)=0$, for any $\alpha$. This shows that $S-\{s\}\subseteq Z(f)$ and therefore we have $S-\{s\}=Z(f)$. Hence $coz(f)=\{s\}$, that is, $coz(f)$ is an open set which contains $s$. Therefore by definition of topology on $S$, the zero set  $Z(f)$ must be a countable set, which implies that $S$ is countable and this is a contradiction.
\end{exam}

\begin{cor}\label{c1}
 Let $X$ be a pseudocompact space. Every maximal ideal of $C(X)$ is  countable strongly annihilated if and only if it is a $z^\circ$-ideal. 
\end{cor}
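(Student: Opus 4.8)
The plan is to deduce this directly from Proposition \ref{p1}, which already characterizes the countable strongly annihilated maximal ideals of $C(X)$ as precisely those that are simultaneously $z^\circ$-ideals \emph{and} real maximal ideals. The only thing pseudocompactness has to contribute is that, for maximal ideals, the ``real'' requirement becomes automatic, so that the equivalence of Proposition \ref{p1} collapses to the one asserted here. The forward implication then costs nothing: if a maximal ideal $M$ is countable strongly annihilated, Proposition \ref{p1} gives in particular that $M$ is a $z^\circ$-ideal. The substance of the corollary is therefore the converse.

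For the converse I would first record the standard fact that a space $X$ is pseudocompact if and only if $\upsilon X=\beta X$ (see \cite{GJ}). Combining this with the description $\upsilon X=\{p\in\beta X: M^p~\mbox{is a real maximal ideal}\}$ recalled in the Introduction, it follows that, when $X$ is pseudocompact, every point of $\beta X$ determines a real maximal ideal. Since every maximal ideal of $C(X)$ has the form $M^p$ for some $p\in\beta X$ (fixed and free alike), this shows that \emph{every} maximal ideal of $C(X)$ is real.

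With that in hand the converse is immediate. Suppose $M$ is a maximal $z^\circ$-ideal of $C(X)$. By the previous paragraph $M$ is also real, so $M$ is at once a $z^\circ$-ideal and a real maximal ideal, and Proposition \ref{p1} then yields that $M$ is countable strongly annihilated.

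The argument contains no genuinely hard step; the single point requiring care is the justification that pseudocompactness forces every maximal ideal to be real, which is exactly the content of the equality $\upsilon X=\beta X$. I would make sure to cite the precise form of this characterization rather than leave it implicit, since it is the hinge on which the whole statement turns: everything else is a direct reading of Proposition \ref{p1}.
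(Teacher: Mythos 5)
Your proposal is correct and follows the same route as the paper: the paper's proof simply cites Proposition \ref{p1} together with Theorem 5.8 of \cite{GJ}, which is exactly the statement that pseudocompactness forces every maximal ideal of $C(X)$ to be real. Your derivation of that fact via $\upsilon X=\beta X$ and the Gelfand--Kolmogoroff representation $M=M^p$ is just an unpacking of the same cited result, so the two arguments coincide in substance.
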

\begin{proof}
It is trivial, by Theorem 5.8 of \cite{GJ} and Proposition \ref{p1}. 	
\end{proof}
The condition of maximality in the above result is necessary. See the next example.

\begin{exam}
Let $X^*=X\cup\{\alpha\}$ be the one-point compactification of $X$, which $X$ is a uncountable discrete space. The ideal $O_\alpha$ is a $sz^\circ$-ideal but not maximal. We define $f_n\in C(X^*)$ such that $Z(f_n)=X^*-\{x_n\}$, for any $x_n\in X$. Note that $x_n\neq x_m$, for $n\neq m$. Clearly, $f_n\in O_\alpha$, for any $n$. Suppose that $Z(f)=\bigcap_{n=1}^\infty Z(f_n)$, then $Z(f)=X^*-\{x_1,x_2,\ldots\}$. If $\alpha\in Z^\circ(f)$, then there is a finite subset $F$ of $X$ such that $\{\alpha\}\cup(X-F)\subseteq Z(f)$. This shows that $\{x_1,x_2,\ldots\}\subseteq F$ which is not true. Therefore $f\notin O_\alpha$ and so $O_\alpha$ is not countable strongly annihilated. 

\end{exam}
\begin{prop}
Let $I$ be a $z$-ideal in $C(X)$ and $M$ be a maximal ideal containing $I$. Then the following statements are equivalent.\\
a) $\frac{M}{I}$  is a	$z^\circ$-ideal in $\frac{C(X)}{I}$.\\
b) $\frac{M}{I}$  is a	nonregular ideal in $\frac{C(X)}{I}$.\\
c) $\frac{M}{I}$  is a	universally proper ideal in  $\frac{C(X)}{I}$.\\
d) $M=\bigcup_{P\in {\rm Min}(I)} P$.
	
\end{prop}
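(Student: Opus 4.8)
The first move is to pass to the quotient $R:=C(X)/I$. Because $I$ is a $z$-ideal it is semiprime, since $f^2\in I$ gives $Z(f)=Z(f^2)$ and hence $f\in I$; thus $R$ is a reduced ring, and the minimal prime ideals of $R$ are exactly the ideals $P/I$ with $P\in{\rm Min}(I)$. Two standard facts about reduced rings will be used throughout: (i) by Lemma 2.15 of \cite{G}, an ideal of a reduced ring is a $z^\circ$-ideal if and only if it is weakly annihilated; and (ii) the set of zerodivisors of $R$ is precisely $\bigcup\{P/I:P\in{\rm Min}(I)\}$. Writing $\overline M:=M/I$, condition (b) says $\overline M$ is contained in the zerodivisors, i.e. $\overline M\subseteq\bigcup_{P\in{\rm Min}(I)}(P/I)$, while (d) asks for the equality $\overline M=\bigcup_{P\in{\rm Min}(I)}(P/I)$. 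This reformulation is what makes the minimal-prime condition (d) directly comparable with (a)--(c).

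Next I would dispatch the soft implications. For (a)$\Rightarrow$(b): if $\overline M$ is a $z^\circ$-ideal then by (i) it is weakly annihilated, and every element of a weakly annihilated ideal is a zerodivisor (as noted in Section 2), so $\overline M$ is nonregular. For (b)$\Leftrightarrow$(c): one direction is immediate, since a universally proper ideal has each singleton annihilated by a nonzero element and therefore consists of zerodivisors. For the converse I would reuse the device of Lemma \ref{l1} and Proposition \ref{p1}: given $f_1,\dots,f_n\in M$, put $g=f_1^2+\cdots+f_n^2$; then $g\in M$ since $M$ is a $z$-ideal, and using that $I$ is a $z$-ideal one checks ${\rm Ann}_R(\overline g)=\bigcap_{i=1}^n{\rm Ann}_R(\overline f_i)$. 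Nonregularity of $\overline M$ makes $\overline g$ a zerodivisor, so a single nonzero element annihilates all the $\overline f_i$ at once, which is universal properness. Finally (d)$\Rightarrow$(a) is quick: if $\overline M$ equals the union of all minimal primes of $R$ then every minimal prime $P/I$ lies in $\overline M$, so for $\overline a\in\overline M$ I may choose a minimal prime $P/I\ni\overline a$ and conclude $P_{\overline a}\subseteq P/I\subseteq\overline M$, whence $\overline M$ is a $z^\circ$-ideal. The implication (d)$\Rightarrow$(b) is trivial from the reformulation above.

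This leaves the one substantial implication, (b)$\Rightarrow$(d) (equivalently, closing the loop back to (a)). Here nonregularity only yields the inclusion $\overline M\subseteq\bigcup(P/I)$, and the entire difficulty is the reverse inclusion $\bigcup_{P\in{\rm Min}(I)}P\subseteq M$, i.e. that every minimal prime over $I$ actually sits inside $M$. No purely reduced-ring argument can give this, and I expect it to be the crux: one must invoke the defining feature of $C(X)$, namely the chain property (Kohls' theorem) that the prime ideals contained in a fixed maximal ideal are totally ordered by inclusion, so that there is a \emph{unique} minimal prime of $R$ below $\overline M$. The plan would be to take an arbitrary minimal prime $Q/I$ and an element of $M\setminus Q$, and to use nonregularity together with the chain property to force $Q\subseteq M$. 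This is precisely where I anticipate trouble, since ruling out minimal primes lying off to the side of $M$ is delicate; it is also the point at which one should decide whether (d) is intended literally or with the union restricted to those minimal primes contained in $M$. By the chain property the restricted reading collapses to the single statement that $M$ is itself minimal over $I$, i.e. that $\overline M$ is a minimal prime of $R$, and I would check the borderline cases (e.g. maximal ideals of $C(X)/I$ that are nonregular but not minimal) carefully before committing to the unrestricted form.
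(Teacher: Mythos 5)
Your reduction to the reduced ring $R=C(X)/I$, the implications $(a)\Rightarrow(b)$ and $(d)\Rightarrow(a)$, the identification of the zerodivisors of $R$ with the union of its minimal primes, and the equivalence $(b)\Leftrightarrow(c)$ via the element $g=f_1^2+\cdots+f_n^2$ and the $z$-ideal property of $I$ are all correct; the last of these is exactly the device the paper uses. The genuine gap is that you never close the cycle of implications: you stake everything on a direct proof of $(b)\Rightarrow(d)$ and then stop at the point where you ``anticipate trouble.'' That is the wrong place to close the loop, and the tool you propose would not help in any case --- Kohls' chain theorem orders the primes \emph{containing} a given prime, whereas a maximal ideal of $C(X)/I$ typically contains many pairwise incomparable minimal primes. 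The loop closes through $(c)\Rightarrow(a)$, and the argument is short: given $\bar a\in\frac{M}{I}$ and $\bar b\notin\frac{M}{I}$, maximality gives $\bar 1=\bar r\bar b+\bar m$ with $\bar m\in\frac{M}{I}$; universal properness applied to $\{\bar a,\bar m\}$ supplies $\bar c\neq 0$ with $\bar c\bar a=\bar c\bar m=0$, and then $\bar c=\bar r\bar c\bar b$ forces $\bar c\bar b\neq 0$, so $\frac{M}{I}$ is weakly annihilated, hence a $z^\circ$-ideal (this is essentially Proposition 3.2 of \cite{GM}). The paper reaches the same point by a different citation: $C(X)/I$ is reduced and satisfies property $A$ by Lemma 4.1 of \cite{AAHS}, and then Lemma 1.22 of \cite{AKR} gives $(b)\Rightarrow(a)$ directly; condition $(d)$ is then handled separately by quoting Theorem 4.2 of \cite{AAHS}, not by the route you attempt.

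Your misgiving about the literal reading of $(d)$ is, however, well founded, and it is worth recording why: with the unrestricted union the equivalence fails. Take $I=(0)$ (a $z$-ideal) and let the space be the one-point compactification $X\cup\{\alpha\}$ of an uncountable discrete space $X$, which appears elsewhere in this paper. This space is an almost $P$-space, so $M_\alpha$ is a $z^\circ$-ideal and $(a)$ holds; but for an isolated point $x_0$ the function $1-\chi_{\{x_0\}}$ is a zerodivisor (it is killed by $\chi_{\{x_0\}}\neq 0$), hence lies in some minimal prime, while its value at $\alpha$ is $1$, so the union of all minimal primes over $(0)$ is not contained in $M_\alpha$ and $(d)$ fails. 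The union in $(d)$ must therefore be taken over those $P\in{\rm Min}(I)$ with $P\subseteq M$, which is how the cited theorem of \cite{AAHS} should be read. Your instinct to check this before committing was the right one, but flagging the ambiguity is not a substitute for completing the proof, and as submitted the proposal proves only $(a)\Rightarrow(b)\Leftrightarrow(c)$ and $(d)\Rightarrow(a)$.
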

\begin{proof}
$(a\Rightarrow b)$ It is clear.	

$(b\Rightarrow c)$ We use part (3) of Theorem 3.2 in \cite{GM}. Let $I(f_1),\ldots,I(f_n)$ be  a finite subset  of $\frac{M}{I}$. Then by hypothesis $I(f_1^2+\cdots+f_n^2)$ is a zerodivisor of  $\frac{C(X)}{I}$, thus there exists $I(g)\neq I$ such that $I((gf_1)^2+\cdots+(gf_n)^2)=I$. Therefore, $(gf_1)^2+\cdots+(gf_n)^2\in I$ and since $I$ is a $z$-ideal then  $gf_i\in I$, and hence $I(gf_i)=I$, for every $1\leq i\leq n$. This complete the proof.

$(b\Rightarrow c)$ It is clear by part (3) of Theorem 3.2 in \cite{GM}.

$(b\Rightarrow a)$ Since $I$ is  semiprime, then $\frac{C(X)}{I}$ is a reduced ring and it is also satisfies the property $A$, by Lemma 4.1 of \cite{AAHS}. Now Lemma 1.22 of \cite{AKR} shows that $\frac{M}{I}$  is a	$z^\circ$-ideal in $\frac{C(X)}{I}$. 

$(a\Leftrightarrow d)$ It is clear by Theorem 4.2 of \cite{AAHS}.  
\end{proof}

In Lemma 4.1 of \cite{AAHS} it is shown that if $I$ is a semiprime ideal of $C(X)$ then the factor ring $\frac{C(X)}{I}$ satisfies the property $A$. However, in the next example we show that the converse is not true.

\begin{exam} 
Recall that every ideal in  $C(\Sigma)$ is absolutely convex. Now let $f\in C(\Sigma)$ and $I=(f)$ such that $Z(f)=\{\sigma\}$. Since $Z(f)$ is not an open set, then $I$ is not a semiprime ideal. We claim that  $\frac{C(\Sigma)}{I}$  satisfies the property $A$. Assume that $\frac{J}{I}$ be an ideal of $\frac{C(\Sigma)}{I}$ consisting of zerodivisors generated by
finitely many elements $I(f_1),\ldots, I(f_n)$, where $f_1,\ldots f_n\in J$.	Hence, $I(|f_i|)\in \frac{J}{I}$, for every $1\leq i\leq n$ and so there exists $I(|g|)\neq I$ such that $I(|gf_1|+\cdots+|gf_n|)=I$. This implies that $|gf_1|+\cdots+|gf_n|\in I$. Since $|gf_i|\leq |gf_1|+\cdots+|gf_n|$, we infer that $|gf_i|\in I$, for every $1\leq i\leq n$ and so we are through.
\end{exam}

In the next result $q(X)$, denotes the classical ring of quotients $C(X)$.

\begin{prop}[\cite{AAHS}, Corollary 5.5] The following statements are equivalent.\\
a) Every maximal ideal of $q(X)$ is real.\\
b) $X$ is a pseudocompact almost $P$-space.\\
c) The set of all real maximal ideals of $C(X)$ coincides with the set of all maximal $z^\circ$-ideals of $C(X)$.
	
\end{prop}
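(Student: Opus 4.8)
The plan is to make (b) the hub and prove the two equivalences (b)$\Leftrightarrow$(c) and (b)$\Leftrightarrow$(a) around it, resting on two structural facts I would isolate first. The first is that $X$ is an almost $P$-space if and only if $q(X)=C(X)$: an element $f$ is regular in $C(X)$ exactly when $\mathrm{int}\,Z(f)=\emptyset$ and is a unit exactly when $Z(f)=\emptyset$, so every regular element is already invertible precisely when $Z(f)\neq\emptyset$ forces $Z^{\circ}(f)\neq\emptyset$. The second is that in an almost $P$-space every $z$-ideal is a $z^{\circ}$-ideal; the point is that almost $P$-ness gives $Z(f)=\mathrm{cl}_X Z^{\circ}(f)$ for every $f$ (if some $x\in Z(f)$ missed $\mathrm{cl}_X Z^{\circ}(f)$, a zero-set neighbourhood $Z(s)$ of $x$ inside the complement would make $Z(f)\cap Z(s)$ a nonempty zero-set with empty interior), whence $Z^{\circ}(f)=Z^{\circ}(g)$ implies $Z(f)=Z(g)$ and the two ideal conditions merge. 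I would also record the two classical inputs: $X$ is pseudocompact iff every maximal ideal of $C(X)$ is real (\cite{GJ}), and $M_x$ is a $z^{\circ}$-ideal iff $x$ is an almost $P$-point (\cite{AKR}).

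With these in hand, (b)$\Rightarrow$(c) and (b)$\Rightarrow$(a) are short. Assume $X$ is a pseudocompact almost $P$-space. Pseudocompactness makes every maximal ideal of $C(X)$ real, and the second structural fact makes every maximal ideal (being a $z$-ideal) a $z^{\circ}$-ideal; hence the real maximal ideals, the maximal $z^{\circ}$-ideals, and all maximal ideals coincide, which is (c). For (a), the first structural fact gives $q(X)=C(X)$, so its maximal ideals are exactly those of $C(X)$, and these are all real by pseudocompactness.

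For (c)$\Rightarrow$(b) I would proceed in two steps. First, each $M_x$ $(x\in X)$ is real, hence by (c) a maximal $z^{\circ}$-ideal, hence a $z^{\circ}$-ideal; since $M_x$ being $z^{\circ}$ says $x$ is an almost $P$-point, $X$ is an almost $P$-space. Second, suppose $X$ were not pseudocompact; then some maximal ideal $M$ is not real. But $X$ is now almost $P$, so by the second structural fact $M$ is a $z^{\circ}$-ideal, i.e.\ a maximal $z^{\circ}$-ideal, and (c) forces $M$ to be real — a contradiction. Hence $X$ is pseudocompact, giving (b). (Proposition \ref{p1} and Lemma \ref{l1} are the conceptual source of the link between real maximal ideals, $z^{\circ}$-ideals, and countable strong annihilation, though the streamlined argument above uses only the merged $z/z^{\circ}$ condition.)

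The remaining and genuinely hard implication is (a)$\Rightarrow$(b). Using the first structural fact this reduces to a single claim: if every maximal ideal of $q(X)$ is real then $X$ is an almost $P$-space (for then $q(X)=C(X)$ and (a) becomes ``every maximal ideal of $C(X)$ is real'', i.e.\ pseudocompactness). The plan for this claim is contrapositive: from a failure of almost $P$-ness, produce a non-real maximal ideal of $q(X)$. Concretely, the maximal ideals of $q(X)=S^{-1}C(X)$ are the $S^{-1}\mathfrak p$ with $\mathfrak p$ a prime of $C(X)$ maximal among those meeting no regular element, and $q(X)/S^{-1}\mathfrak p\cong\mathrm{Frac}(C(X)/\mathfrak p)$; so I would start from a regular non-unit $f$ (with $\mathrm{int}\,Z(f)=\emptyset\neq Z(f)$), sit a minimal prime below a fixed maximal ideal $M_{x_0}$ where $x_0\in Z(f)$, enlarge it to such a maximal nonregular prime $\mathfrak p$, and show its residue field is not $\mathbb R$. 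The main obstacle is exactly this last point: controlling $\mathrm{Frac}(C(X)/\mathfrak p)$ and showing it is non-archimedean when $\mathfrak p$ is not a real maximal ideal. This is where the order-theoretic structure of prime quotients of $C(X)$ (convexity of the primes, infinitely small images of functions in $M\setminus\mathfrak p$) must be exploited, and it is the step I expect to require the most care — essentially the content imported from \cite{AAHS}.
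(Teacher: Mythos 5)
The paper itself offers no proof of this proposition --- it is quoted verbatim, with attribution, as Corollary 5.5 of \cite{AAHS} --- so there is no internal argument to measure you against; the following judges your write-up on its own terms. Your two structural facts are correct and correctly deployed, and the implications $(b)\Rightarrow(c)$, $(c)\Rightarrow(b)$ and $(b)\Rightarrow(a)$ are essentially complete (one point worth saying aloud in $(b)\Rightarrow(c)$: a maximal $z^\circ$-ideal sits inside a maximal ideal, which under (b) is itself a $z^\circ$-ideal, so the maximal $z^\circ$-ideals really are the maximal ideals). The genuine gap is $(a)\Rightarrow(b)$. You reduce it correctly to ``every maximal ideal of $q(X)$ real implies $X$ is almost $P$,'' and you set up the right contrapositive construction, but you never actually produce the non-real maximal ideal of $q(X)$: the decisive step --- showing the residue field ${\rm Frac}(C(X)/P)$ is not $\Bbb R$ --- is explicitly deferred to \cite{AAHS}. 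Since that step is the entire content of the implication, the proof as written is incomplete.

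It does close along exactly the lines you laid out, using two standard facts about $C(X)$ you did not invoke: every prime ideal of $C(X)$ is contained in a \emph{unique} maximal ideal, and $C(X)/P$ is a totally ordered integral domain for every prime $P$ (both in \cite{GJ}). Take $f$ with $Z(f)\neq\emptyset$ but $Z^\circ(f)=\emptyset$, fix $x_0\in Z(f)$, put a minimal prime inside $M_{x_0}$ and enlarge it to a prime $P$ maximal among nonregular primes; uniqueness of the maximal ideal over a prime keeps $P\subseteq M_{x_0}$. Since $f$ is regular, $f\notin P$, yet $f\in M_{x_0}$, so $P\subsetneq M_{x_0}$ and $f+P$ is a nonzero element of $C(X)/P$ equal to no real constant $r$ (otherwise $f-r\in P\subseteq M_{x_0}$ gives $r\in M_{x_0}$, so $r=0$ and $f\in P$, a contradiction). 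Hence the residue field of the corresponding maximal ideal of $q(X)$, namely ${\rm Frac}(C(X)/P)$, is a totally ordered field properly containing $\Bbb R$, therefore non-archimedean, therefore not isomorphic to $\Bbb R$ --- the non-real maximal ideal you needed. With this paragraph inserted your argument becomes a correct, self-contained proof, which is more than the paper supplies.
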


In the following proposition we see another equivalent condition for pseudocompact almost $P$-spaces in terms of  countable strongly annihilated ideals. 

\begin{prop}\label{p2}
A space $X$ is a pseudocompact almost $P$-space if and only if every maximal ideal of $C(X)$ is countable strongly annihilated.
\end{prop}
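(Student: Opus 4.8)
The plan is to prove the biconditional by stringing together results already established in the paper, reducing the statement to Proposition~\ref{p1} and the cited Corollary~5.5 of \cite{AAHS}. The key observation is that Proposition~\ref{p1} characterizes a maximal ideal $M$ as countable strongly annihilated precisely when it is a real maximal $z^\circ$-ideal. So the condition ``every maximal ideal of $C(X)$ is countable strongly annihilated'' is equivalent, by Proposition~\ref{p1} applied to each maximal ideal, to the condition ``every maximal ideal of $C(X)$ is simultaneously a real maximal ideal and a $z^\circ$-ideal.''

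From here I would argue as follows. First, requiring every maximal ideal to be real is exactly the statement that $C(X)$ has no hyperreal maximal ideals, which by Theorem~5.8 of \cite{GJ} is equivalent to $X$ being pseudocompact. Second, once $X$ is pseudocompact, requiring in addition that every maximal ideal be a $z^\circ$-ideal forces the real maximal ideals to coincide with the maximal $z^\circ$-ideals; by the equivalence $(a)\Leftrightarrow(c)$ of the Corollary~5.5 of \cite{AAHS} just quoted (equivalently its clause (b)), this is exactly the assertion that $X$ is a pseudocompact almost $P$-space. I would present the forward direction $(\Rightarrow)$ by assuming every maximal ideal is countable strongly annihilated, invoking Proposition~\ref{p1} to get that every $M$ is real (hence $X$ is pseudocompact) and a $z^\circ$-ideal, and then quoting the cited corollary to conclude $X$ is an almost $P$-space. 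For the converse $(\Leftarrow)$, assuming $X$ is a pseudocompact almost $P$-space, the cited corollary gives that the real maximal ideals coincide with the maximal $z^\circ$-ideals, and pseudocompactness makes every maximal ideal real; thus every maximal ideal is a real maximal $z^\circ$-ideal, and Proposition~\ref{p1} returns that it is countable strongly annihilated.

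An equivalent and perhaps cleaner packaging would be to observe directly that Corollary~\ref{c1} already handles the pseudocompact world: for pseudocompact $X$, a maximal ideal is countable strongly annihilated if and only if it is a $z^\circ$-ideal. So I could first establish the pseudocompactness half (every $M$ real $\Leftrightarrow X$ pseudocompact, via Proposition~\ref{p1} and Theorem~5.8 of \cite{GJ}), then apply Corollary~\ref{c1} to reduce the remaining content to ``every maximal ideal is a $z^\circ$-ideal,'' and finally identify this with the almost $P$-space condition through the displayed Corollary~5.5 of \cite{AAHS}.

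The step I expect to require the most care is the interface between ``every maximal ideal is a $z^\circ$-ideal'' and the almost $P$-space property. The cited Corollary~5.5 phrases the equivalence in terms of the real maximal ideals coinciding with the maximal $z^\circ$-ideals, not literally in terms of all maximal ideals being $z^\circ$-ideals, so I must be careful that pseudocompactness (making every maximal ideal real) is what lets me pass between these two formulations without circularity. Concretely, the subtlety is that in the converse direction I am entitled to use pseudocompactness to identify the class of maximal ideals with the class of real maximal ideals before invoking the coincidence in clause (c); getting that ordering of invocations right is the only genuinely delicate point, and the rest is a direct appeal to the two already-proved results.
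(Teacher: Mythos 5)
Your proof is correct and, in its overall shape, matches the paper's: both arguments reduce the statement to Proposition~\ref{p1} (equivalently, Corollary~\ref{c1}) for the equivalence ``countable strongly annihilated $\Leftrightarrow$ real maximal and $z^\circ$,'' and to Theorem~5.8 of \cite{GJ} for ``every maximal ideal real $\Leftrightarrow$ $X$ pseudocompact.'' The one genuine difference is the link to the almost $P$-space property: the paper cites Theorem~2.14 of \cite{AKR0}, which ties the almost $P$-space condition directly to every maximal ideal of $C(X)$ being a $z^\circ$-ideal, whereas you route through clause (c) of the displayed Corollary~5.5 of \cite{AAHS}, which is phrased in terms of the \emph{maximal $z^\circ$-ideals} (ideals maximal among $z^\circ$-ideals) coinciding with the real maximal ideals. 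Your route works, and you correctly flag the delicate point, but to make it airtight you should record the small bridge between the two formulations: once every maximal ideal is a $z^\circ$-ideal, the maximal $z^\circ$-ideals are exactly the maximal ideals (an ideal maximal among $z^\circ$-ideals is contained in a maximal ideal that is itself a $z^\circ$-ideal, hence equals it), and conversely, if the real maximal ideals coincide with the maximal $z^\circ$-ideals and pseudocompactness makes every maximal ideal real, then every maximal ideal is a $z^\circ$-ideal. The paper's citation of \cite{AKR0} avoids this bookkeeping; your version is no less valid but carries that extra (easy) verification.
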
	
\begin{proof}
$(\Rightarrow)$ Let $M$ be a maximal ideal of $C(X)$. Since $X$ is an almost $P$-space, then $M$ is $z^\circ$-ideal, by Theorem 2.14 of \cite{AKR0}. Also because $X$ is pseudocompact, then by Corollary \ref{c1}, we conclude that  $M$  is  countable strongly annihilated.

$(\Leftarrow)$
By Proposition \ref{p1}, every maximal ideal is a $z^\circ$-ideal and real maximal, hence by   Theorem 2.14 of \cite{AKR0}, $X$ is an almost $P$-space and by  Theorem 5.8 of \cite{GJ}, $X$ is a pseudocompact space. 
\end{proof}	
\begin{Rem}
a) Let $X$ be a realcompact almost $P$-space. Then every maximal ideal of $C(X)$ is fixed if and only if it is a countable strongly annihilated ideal.\\
b) Let $X$ be a compact almost $P$-space. Then every maximal ideal of $C(X)$ is countable strongly annihilated.
\end{Rem}

\begin{Rem}
If every ideal of $C(X)$ is  countable strongly annihilated then $X$ is a $P$-space. The converse of this fact is not true. For example, the space $\Bbb N$ is a $P$-space and since it is not pseudocompact, then by Proposition \ref{p2}, there is a maximal ideal which is not   countable strongly annihilated. 
\end{Rem}

\begin{prop}\label{p33}
The following statements are equivalent for an ideal $I$ of a $P$-space $X$.\\
a) $I$  is a	countable strongly annihilated ideal.\\
b) $Z[I]$ closed under countable intersection.\\
c) $I$ satisfies in c.s.a.c..
	
\end{prop}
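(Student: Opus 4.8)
The plan is to establish the cycle (a)$\Rightarrow$(b)$\Rightarrow$(c)$\Rightarrow$(a). The structural fact I would extract first is that, since $X$ is a $P$-space, every $Z(f)$ is clopen and hence $C(X)$ is von Neumann regular (see \cite{GJ}); consequently every ideal of $C(X)$ is a $z$-ideal, and in fact a $z^\circ$-ideal since $Z^\circ(f)=Z(f)$ here, and therefore weakly annihilated. Indeed, if $Z(g)=Z(f)$ with $f\in I$, then $coz(g)=coz(f)$, and as each of $f,g$ generates the ideal determined by the corresponding clopen cozero-set, $(g)=(f)\subseteq I$, so $g\in I$. This observation is what ties all three conditions together, and it is the only place the $P$-space hypothesis is genuinely needed.

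For (a)$\Rightarrow$(b) I would reproduce the computation already made in Lemma \ref{l1}: given $f_n\in I$, normalize so that $|f_n|\leq 1$ (permissible because $I$ is a $z$-ideal) and put $f=\sum_{n=1}^\infty \frac{|f_n|}{2^n}\in C(X)$, so that $Z(f)=\bigcap_{n=1}^\infty Z(f_n)$ and ${\rm Ann}(f)=\bigcap_{n=1}^\infty {\rm Ann}(f_n)$. If $f\notin I$, countable strong annihilation produces $g$ with $gf_n=0$ for all $n$ but $gf\neq 0$, contradicting $g\in\bigcap_n{\rm Ann}(f_n)={\rm Ann}(f)$; hence $f\in I$ and $\bigcap_n Z(f_n)=Z(f)\in Z[I]$. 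For (b)$\Rightarrow$(c), take a countable $S=\{a_n\}\subseteq I$; by (b) there is $a\in I$ with $Z(a)=\bigcap_n Z(a_n)$. Using the identity $ga=0\iff coz(g)\subseteq Z(a)$, valid in any $C(X)$, I get ${\rm Ann}(a)=\{g:coz(g)\subseteq\bigcap_n Z(a_n)\}=\bigcap_n{\rm Ann}(a_n)={\rm Ann}(S)$ with $a\in I$, which is exactly the c.s.a.c. Note that this middle implication needs no $P$-space hypothesis.

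The remaining implication (c)$\Rightarrow$(a) is where I expect the real content to sit. Having the c.s.a.c. alone is not enough; one also needs $I$ weakly annihilated, and this is supplied precisely by the opening observation. With both in hand, (c)$\Rightarrow$(a) is immediate from the earlier proposition asserting that a weakly annihilated ideal satisfying the c.s.a.c. is countable strongly annihilated. If one prefers an argument that exposes where the clopen zero-sets enter, the direct route is: given the $a\in I$ with ${\rm Ann}(S)={\rm Ann}(a)$ and a $b\notin I$, take $c$ to be the characteristic function of the clopen set $Z(a)$, which lies in $C(X)$ because $X$ is a $P$-space. Then $c\in{\rm Ann}(a)={\rm Ann}(S)$, so $ca_n=0$ for all $n$, while $cb=0$ would force $coz(b)\subseteq coz(a)$, i.e. $Z(a)\subseteq Z(b)$, whence $(b)\subseteq(a)\subseteq I$ and $b\in I$, a contradiction; thus $cb\neq 0$, establishing countable strong annihilation. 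The write-up should be short, with the only subtlety being the verification that in a $P$-space every ideal is weakly annihilated.
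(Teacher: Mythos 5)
Your proof is correct and follows essentially the same route as the paper: (a)$\Rightarrow$(b) via the construction in Lemma \ref{l1}, (b)$\Rightarrow$(c) by the cozero-set description of annihilators, and (c)$\Rightarrow$(a) by annihilating with the characteristic function of the clopen set $Z(a)$ and concluding via 1D.1 of \cite{GJ} that $cb=0$ would force $b\in I$. Your explicit justification that every ideal of $C(X)$ over a $P$-space is a $z^\circ$-ideal (hence weakly annihilated), and the resulting shortcut through the earlier ``weakly annihilated $+$ c.s.a.c.'' proposition, are useful clarifications of steps the paper leaves implicit, but they do not change the argument.
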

\begin{proof}
$(a\Rightarrow b)$ It is clear by Lemma \ref{l1}.

$(b\Rightarrow c)$ Let $S=\{f_n:n=1,\ldots\}$ be a countable subset of $I$. There is $f\in C(X)$ such that  ${\rm Ann}(f)=\bigcap_{n=1}^\infty{\rm Ann}(f_n)$ and $\bigcap_{n=1}^\infty Z(f_n)=Z(f)\in Z[I]$. Since $I$ is a $z$-ideal, then $f\in I$ and we are through. 
 
$(c\Rightarrow a)$ Let $S=\{f_n:n=1,\ldots\}$ be a countable subset of $I$ and $g\notin I$. Suppose that $f\in I$ in which ${\rm Ann}(f)=\bigcap_{n=1}^\infty{\rm Ann}(f_n)$ and $\bigcap_{n=1}^\infty Z(f_n)=Z(f)\in Z[I]$. We define $h(x)=1$ if $x\in Z(f)$ and $h(x)=0$ if $x\in coz(f)$. Then $h$ is a continuous function on $X$. Clearly, $hf=0$ and hence $hf_n=0$, for every $n\in \Bbb N$. It is sufficient to show that $hg\neq 0$. Otherwise, $Z(f)\subseteq coz(h)\subseteq Z(g)$ implies that $Z(f)\subseteq Z^\circ(g)$. By 1D.1 of \cite{GJ} there is $k\in C(X)$ such that $g=fk$ which conclude that  $g\in I$ and a contradiction.
\end{proof}

\begin{cor}
The following statements are equivalent for a $z^\circ$-ideal $I$ of $C(X)$.\\
a) $I$  is a	countable strongly annihilated ideal.\\
b) $Z[I]$ closed under countable intersection.\\
c) $I$ satisfies in c.s.a.c..
	
\end{cor}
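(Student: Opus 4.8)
The plan is to follow the template of Proposition \ref{p33}, replacing the role played there by the $P$-space hypothesis with the hypothesis that $I$ is a $z^\circ$-ideal. Two preliminary observations make the first two implications automatic. First, every $z^\circ$-ideal of $C(X)$ is in particular a $z$-ideal, since $Z(f)=Z(g)$ forces $Z^\circ(f)=Z^\circ(g)$; second, $C(X)$ is reduced. Thus $(a\Rightarrow b)$ is immediate from Lemma \ref{l1}, whose proof already establishes that $Z[I]$ is closed under countable intersection for any countable strongly annihilated ideal. For $(b\Rightarrow c)$ I would repeat the construction used in Proposition \ref{p33} verbatim: given a countable $S=\{f_n\}\subseteq I$ (normalized so that $|f_n|\leq 1$), the function $f=\sum_{n=1}^\infty |f_n|/2^n$ lies in $C(X)$ and satisfies ${\rm Ann}(f)=\bigcap_{n=1}^\infty{\rm Ann}(f_n)$ together with $Z(f)=\bigcap_{n=1}^\infty Z(f_n)$; by (b) this zero-set belongs to $Z[I]$, and since $I$ is a $z$-ideal we conclude $f\in I$. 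As ${\rm Ann}(S)={\rm Ann}(f)$ with $f\in I$, this is exactly the c.s.a.c. condition for $S$, and $S$ was arbitrary.

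The substantive implication is $(c\Rightarrow a)$, and this is where the $z^\circ$ hypothesis enters. The key algebraic fact I would isolate is that in a reduced ring one has $P_f=\{g:{\rm Ann}(f)\subseteq{\rm Ann}(g)\}$. Indeed, if ${\rm Ann}(f)\subseteq{\rm Ann}(g)$ then every minimal prime $P$ containing $f$ also contains $g$, using that for a minimal prime of a reduced ring one has $f\in P$ iff ${\rm Ann}(f)\nsubseteq P$; conversely, if $g\in P_f$ and $c\in{\rm Ann}(f)$, then $cg$ lies in every minimal prime and hence $cg=0$. Granting this, the argument is short. Given $S=\{f_n\}\subseteq I$ and $g\notin I$, the c.s.a.c. furnishes $f\in I$ with ${\rm Ann}(f)=\bigcap_{n=1}^\infty{\rm Ann}(f_n)$. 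Any $c$ with $cf_n=0$ for all $n$ is precisely an element of ${\rm Ann}(f)$, so it suffices to produce $c\in{\rm Ann}(f)$ with $cg\neq 0$, that is, to show ${\rm Ann}(f)\nsubseteq{\rm Ann}(g)$. If on the contrary ${\rm Ann}(f)\subseteq{\rm Ann}(g)$, then $g\in P_f\subseteq I$ because $f\in I$ and $I$ is a $z^\circ$-ideal, contradicting $g\notin I$; hence such a $c$ exists and $I$ is countable strongly annihilated.

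I expect the main obstacle to be the reduced-ring identity $P_f=\{g:{\rm Ann}(f)\subseteq{\rm Ann}(g)\}$, equivalently its $C(X)$-shadow ${\rm Ann}(f)\subseteq{\rm Ann}(g)\Leftrightarrow Z^\circ(f)\subseteq Z^\circ(g)$, since this is precisely the step that replaces the $P$-space assumption of Proposition \ref{p33} (where one could instead build a continuous function equal to $1$ on the clopen set $Z(f)$). Everything else is a transcription of that proposition. If one prefers to bypass the minimal-prime computation, the same conclusion is reachable topologically: since ${\rm Ann}(f)=\{h\in C(X):coz(h)\subseteq Z(f)\}$, complete regularity gives ${\rm Ann}(f)\subseteq{\rm Ann}(g)$ iff $Z^\circ(f)\subseteq Z^\circ(g)$, and the subset form of the $z^\circ$-ideal property then yields $g\in I$.
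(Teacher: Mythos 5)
Your proposal is correct and follows the route the paper intends: the paper's proof is literally ``similar to the proof of Proposition~\ref{p33},'' and you reproduce $(a\Rightarrow b)$ via Lemma~\ref{l1} and $(b\Rightarrow c)$ via the weighted sum $f=\sum|f_n|/2^n$ exactly as there. Your only genuine addition is to notice that the $(c\Rightarrow a)$ step of Proposition~\ref{p33} (the clopen characteristic function of $Z(f)$) does not transfer, and your replacement --- ${\rm Ann}(f)\subseteq{\rm Ann}(g)$ would force $g\in P_f\subseteq I$ since $I$ is a $z^\circ$-ideal --- is precisely the argument already used in the paper's earlier proposition that a weakly annihilated ideal with c.s.a.c.\ is countable strongly annihilated, so it is the intended patch.
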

\begin{proof}
It is similar to the proof of Proposition \ref{p33}.
\end{proof}

In Theorem 4.4 of \cite{Az} it is shown that $X$ is Lindel\"{o}f if and only if every strongly divisible ideal is fixed. If $X$ is an almost $P$-space, then the same result holds for  countable strongly annihilated ideals, see the next proposition. 
\begin{prop}
An almost $P$-space $X$ is Lindel\"{o}f if and only if every countable strongly annihilated ideal is fixed.	
\end{prop}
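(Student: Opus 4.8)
The plan is to exploit the machinery already built in the excerpt, since the statement connects two notions (Lindel\"of via strongly divisible ideals, and countable strongly annihilated ideals) that have already been tied together. The key bridge is Lemma \ref{l1}, which says every countable strongly annihilated ideal in $C(X)$ is strongly divisible, together with Theorem 4.4 of \cite{Az}, which characterizes Lindel\"of spaces by the statement that every strongly divisible ideal is fixed. The almost $P$-space hypothesis should be what lets me run the argument in both directions, via Proposition \ref{p33}-style reasoning and the identification (from the proposition preceding Example \ref{e1}) of countable strongly annihilated ideals with strongly divisible $z$-ideals on almost $P$-spaces.

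For the forward direction, suppose $X$ is Lindel\"of (and an almost $P$-space). Let $I$ be a countable strongly annihilated ideal. By Lemma \ref{l1}, $I$ is strongly divisible. Then Theorem 4.4 of \cite{Az} applies directly: since $X$ is Lindel\"of, every strongly divisible ideal is fixed, so $I$ is fixed. This direction needs no use of the almost $P$-space hypothesis and should be immediate.

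For the converse, assume every countable strongly annihilated ideal is fixed; I want to conclude $X$ is Lindel\"of, and by Theorem 4.4 of \cite{Az} it suffices to show every strongly divisible ideal is fixed. Here is where the almost $P$-space hypothesis does the work: I will reduce to the case already handled. Given a strongly divisible ideal $J$, I would pass to the strongly divisible $z$-ideal it generates (or argue on its $z$-hull) and invoke the Proposition stating that on an almost $P$-space, countable strongly annihilated ideals and strongly divisible $z$-ideals coincide. That proposition converts the strongly divisible $z$-ideal into a countable strongly annihilated ideal, which by assumption is fixed; pulling back, the original $J$ shares the same zero-set family, hence is fixed as well. Then Theorem 4.4 of \cite{Az} delivers Lindel\"of.

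The main obstacle I anticipate is the converse, specifically the matching of zero-set families when passing between an arbitrary strongly divisible ideal $J$ and the strongly divisible $z$-ideal needed to apply the coincidence proposition. The notion ``fixed'' is about $\bigcap Z[J] \neq \emptyset$, so I must be careful that generating a $z$-ideal or taking a $z$-closure does not enlarge the zero-set family in a way that changes whether the intersection is empty; since the $z$-ideal generated by $J$ has the same zero-sets of its generating functions, $\bigcap Z[J]$ is preserved, and the almost $P$-space hypothesis guarantees the resulting $z$-ideal is genuinely countable strongly annihilated rather than merely strongly divisible. Verifying that this $z$-closure is well-behaved, and that strong divisibility survives it, is the delicate point; everything else is a direct citation of Lemma \ref{l1}, the coincidence proposition, and Theorem 4.4 of \cite{Az}.
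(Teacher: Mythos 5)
Your forward direction is correct and essentially the paper's: both rest on Lemma \ref{l1} (the paper extracts from its proof that $Z[I]$ has the countable intersection property and concludes fixedness directly in a Lindel\"of space; you route through Theorem 4.4 of \cite{Az} instead, which amounts to the same thing).

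Your converse, however, takes a genuinely different route and contains a real gap. The paper does not invoke Theorem 4.4 of \cite{Az} as a black box; it re-runs Azarpanah's construction: if $X$ is not Lindel\"of one builds a free $z$-ideal whose zero-set family is closed under countable intersection, the almost $P$-space hypothesis makes it a $z^\circ$-ideal, and the corollary following Proposition \ref{p33} then makes it a free countable strongly annihilated ideal, a contradiction. Your reduction instead needs the claim you flag but never establish: that the $z$-ideal generated by a strongly divisible ideal $J$ is again strongly divisible. This cannot be checked from the definition alone: if $g$ lies in the $z$-hull and $c\in J$ is the common divisor supplied by strong divisibility, you only know $Z(c)\subseteq Z(g)$, and that does not make $g$ a multiple of $c$ in $C(X)$ (divisibility requires something like $Z(c)\subseteq Z^\circ(g)$ or a growth estimate, cf. 1D of \cite{GJ}). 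So the witness for $J$ does not obviously serve for the $z$-hull, and as written the converse is incomplete.

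The gap is fillable, but only by passing through the zero-set characterization. Put $J_z=Z^{-1}[Z[J]]$, so $Z[J_z]=Z[J]$ and $J$, $J_z$ are simultaneously fixed or free. Given countably many $g_n\in J_z$, pick $f_n\in J$ with $Z(g_n)=Z(f_n)$; strong divisibility of $J$ gives $c\in J$ dividing every $f_n$, hence $Z(c)\subseteq\bigcap_n Z(g_n)=Z(k)$ for some $k\in C(X)$, and since $J_z$ is a $z$-ideal containing $c$ we get $k\in J_z$, i.e.\ $Z[J_z]$ is closed under countable intersection. Lemma 4.1 of \cite{Az} (the same lemma used in Lemma \ref{l1}) then yields that $J_z$ is strongly divisible. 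With that step supplied, your argument closes: $J_z$ is a strongly divisible $z$-ideal, hence countable strongly annihilated by the coincidence proposition (the only place almost $P$ is used), hence fixed by hypothesis, hence $J$ is fixed, and Theorem 4.4 of \cite{Az} gives Lindel\"of.
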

\begin{proof}
$(\Rightarrow)$ It is hold without assuming that the space is an almost $P$-space. Suppose that $X$ is Lindel\"{o}f and $I$ is a  countable strongly annihilated ideal. In proving Lemma \ref{l1}, we show that $Z[I]$ has the countable intersection property and hence $Z[I]$ is fixed, i.e., $I$ is fixed.  	
	
$(\Leftarrow)$ It is similar to the proof of Theorem 4.4 of \cite{Az}. Just to point out that since $X$ is an almost $P$-space, every $z$-ideal is a $z^\circ$-ideal and we use the above corollary.	
\end{proof}

It is clear that every real maximal ideal of $C(X)$ satisfy in c.s.a.c.. The proof of the next result is clear by Propositions \ref{p1} and \ref{p33}.

\begin{prop}\label{P1}
Let $R$ be a reduced ring with s.a.c.. For a maximal ideal $M$ of $R$, the following conditions are equivalent.\\
a) $M$ is a universally contracted ideal.\\
b) $M$ is a $sz^\circ$-ideal.\\
c) $M$ is a $z^\circ$-ideal.\\
d) $M$ is a nonregular ideal.\\
e) Each finite subset of $M$ is annihilated by
a nonzero element of $R$.\\
f) $M[\alpha]\neq R[\alpha]$ for each simple ring extension $R[\alpha]$ of $R$.\\
g) $M$ is  a contracted proper ideal.\\
h) $M$  is contracted from each simple ring extension of $R$.\\
i) $M[x]$ is a universally contracted ideal in $R[x]$.\\
j) $M[x]$ is a $sz^\circ$-ideal in $R[x]$.\\
k) $M[x]$ is a $z^\circ$-ideal in $R[x]$.\\
In case $R=C(X)$ and $M$ is a real maximal ideal of $C(X)$, the following statement is equivalent to the above statements.\\
l) $M$	is a countable strongly annihilated ideal.\\
In case $R=C(X)$ and $X$ is a finite space, the following statements are equivalent to the above statements.\\
m) $M[[x]]$	is a countable strongly annihilated in $R[[x]]$.\\
n) $M[[x]]$	is a $z^\circ$-ideal in $R[[x]]$.
	
\end{prop}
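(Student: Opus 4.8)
The plan is to group the eleven general conditions (a)--(k) into two families and fuse them using the strong annihilator condition, then append the three $C(X)$-specific conditions (l)--(n) via Proposition \ref{p1} and the power-series transfer. A preliminary observation makes the whole machinery applicable to $C(X)$: the ring $C(X)$ is reduced and satisfies s.a.c., because $\mathrm{Ann}(f_1,\dots,f_n)=\mathrm{Ann}(f_1^2+\cdots+f_n^2)$ with $f_1^2+\cdots+f_n^2\in(f_1,\dots,f_n)$; the same identity shows $C(X)$ has property A. The two families are the ``$z^\circ$ family'' $\{$(a),(b),(c),(i),(j),(k)$\}$ and the ``nonregular family'' $\{$(d),(e),(f),(g),(h)$\}$, and the role of s.a.c. is precisely to glue them together.

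For the internal structure, within the $z^\circ$ family the key step is (b)$\Leftrightarrow$(c): in a reduced ring weakly annihilated $=z^\circ$ and strongly annihilated $=sz^\circ$ (Lemmas 2.15 and 2.16 of \cite{G}), and s.a.c. promotes the former to the latter exactly as in the argument following Lemma \ref{L1} --- given finite $\{a_1,\dots,a_n\}\subseteq M$, choose $c\in(a_1,\dots,a_n)$ with $\mathrm{Ann}(c)=\bigcap_i\mathrm{Ann}(a_i)$ and apply weak annihilation to $c$. Then (a)$\Leftrightarrow$(b) is the identification $sz^\circ=$ strongly annihilated $=$ universally contracted via \cite{G} and \cite{GM}; the polynomial items follow from (b)$\Leftrightarrow$(k) (Proposition 3.9 of \cite{AM1}, equating $sz^\circ$-ness of $M$ with $z^\circ$-ness of $M[x]$) together with the universal-contraction stability under adjoining indeterminates (Proposition 3.3 of \cite{GM}), which places (i),(j) in the cycle. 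Within the nonregular family, (d)$\Leftrightarrow$(e)$\Leftrightarrow$(f)$\Leftrightarrow$(g)$\Leftrightarrow$(h) is Theorem 3.2 of \cite{GM}, once one notes that property A makes ``nonregular'' and ``universally proper'' coincide for a maximal ideal.

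The bridge (c)$\Leftrightarrow$(d) is where s.a.c. is spent a second time: every element of a weakly annihilated ideal is a zerodivisor, so (c)$\Rightarrow$(d) is automatic, while property A (a consequence of s.a.c.) together with Lemma 1.22 of \cite{AKR} shows that a nonregular maximal ideal of a reduced ring is a $z^\circ$-ideal, giving (d)$\Rightarrow$(c). This closes (a)--(k). For (l) I would invoke Proposition \ref{p1} directly: as $M$ is assumed \emph{real}, $M$ is countable strongly annihilated iff it is a real $z^\circ$-ideal iff it is a $z^\circ$-ideal, i.e. (l)$\Leftrightarrow$(c). For the finite-space case, $C(X)=\mathbb{R}^{|X|}$ is a finite product of fields, every $M$ equals $\mathrm{Ann}(e)$ for an idempotent $e$, and one checks $M[[x]]=\mathrm{Ann}(e)$ inside the reduced ring $R[[x]]$; hence $M[[x]]$ is countable strongly annihilated (annihilator ideals always are), which is (m), and by the earlier proposition that a countable strongly annihilated ideal of a reduced ring is an $\aleph_0$-$z^\circ$-ideal it is in particular a $z^\circ$-ideal, which is (n). Since for finite $X$ every one of (a)--(l) already holds, verifying (m),(n) is exactly verifying they too hold, and the transfer proposition ($M$ countable strongly annihilated in $R$ iff $M[[x]]$ is so in $R[[x]]$) plus realness of $M$ ties them back to (c).

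The main obstacle I anticipate is the bookkeeping of \emph{where} s.a.c. may legitimately be used. Both (b)$\Leftrightarrow$(c) and the bridge (c)$\Leftrightarrow$(d) consume s.a.c. in $R$, yet $R[x]$ and $R[[x]]$ need not inherit it; consequently the extension-ring statements (i)--(k) and (m),(n) must be reached \emph{through} $R$ by means of the transfer results (Proposition 3.9 of \cite{AM1}, Proposition 3.3 of \cite{GM}, and the countable-strongly-annihilated transfer proposition) rather than by re-running the annihilator arguments in the extension. The second delicate point is confirming that $C(X)$ really does satisfy s.a.c. and that, for a real maximal ideal, the realness hypothesis (via Theorem 5.14 of \cite{GJ} and Proposition \ref{p1}) supplies whatever one would otherwise want s.a.c. for at that step.
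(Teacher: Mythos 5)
Your proposal is correct and, for items (a)--(l), follows essentially the same route as the paper: the same cycle through Theorem 3.2/3.5 of \cite{GM} for the ``nonregular/universally proper/contracted'' block, Lemma 1.22 of \cite{AKR} for the bridge from nonregular back to $z^\circ$, Proposition 2.8/3.9 of \cite{AM1} for promoting $z^\circ$ to $sz^\circ$ and passing to $M[x]$, and Proposition \ref{p1} for item (l). The genuine divergence is in (m) and (n): the paper observes that $C(X)$ is Noetherian when $X$ is finite and then cites Corollary 3.16 together with Propositions 3.19 and 3.20 of \cite{AM2} to transfer countable strong annihilation and $z^\circ$-ness to $R[[x]]$, whereas you argue directly that $C(X)\cong\R^{|X|}$, that $M=\mathrm{Ann}(e)$ for an idempotent $e$, and that $M[[x]]=\mathrm{Ann}_{R[[x]]}(e)$ is an annihilator ideal in a reduced ring, hence countable strongly annihilated and in particular a $z^\circ$-ideal. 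Your version is more elementary and self-contained (and correctly notes that for finite $X$ all of (a)--(l) hold automatically, so the equivalence is really just a verification that (m),(n) also hold), at the cost of not exhibiting the general Noetherian/power-series mechanism the paper is advertising. You also supply something the paper leaves tacit: the explicit check that $C(X)$ is reduced and satisfies s.a.c.\ via $\mathrm{Ann}(f_1,\dots,f_n)=\mathrm{Ann}(f_1^2+\cdots+f_n^2)$, without which items (l)--(n) would not fall under the hypotheses of the proposition; this is a worthwhile addition.
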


\begin{proof}
$(a\Leftrightarrow b)$ It is clear by Theorem 3.5 in \cite{GM}.

$(b\Rightarrow c)$ It is clear by Proposition 1.2 in \cite {GM}.

$(c\Rightarrow d)$ It is clear.

$(d\Rightarrow e)$ It is clear for $R$ satisfy the s.a.c..

$(e\Leftrightarrow f)$ and $(f\Leftrightarrow g)$  They are clear by Proposition 3.2 in \cite{GM}.

$(f\Rightarrow h)$ Let $R[\alpha]$ be a simple ring extension of $R$. Clearly $M[\alpha]$ is an ideal of $R[\alpha]$. Hence $M[\alpha]\cap R$ is an ideal of $R$ which contains $M$. If $M[\alpha]\cap R=R$, then  $R[\alpha]= M[\alpha]$ which is a contradiction. Therefore  $M[\alpha]\cap R=M$ by maximality of $M$ and we are done.

$(h\Rightarrow f)$ On the contrary assume that there exists $\alpha$
such that $M[\alpha]= R[\alpha]$. We claim that $M$ is not contracted from $R[\alpha]$. Suppose that there is an ideal ${\mathcal I}$ of $R[\alpha]$ such that $M={\mathcal I}\cap R$. Clearly, $M[\alpha]= R[\alpha]\subseteq {\mathcal I}$. Hence ${\mathcal I}=R[\alpha]$ which implies that $M=R$ and it is a contradiction.

$(e\Rightarrow d)$ It is clear.

$(d\Rightarrow c)$ It is clear by Corollary 1.22 in \cite{AKR}.

$(c\Rightarrow b)$ It is obvious by Proposition 2.8 in \cite{AM1}.

$(i\Leftrightarrow j)$, $(j\Leftrightarrow k)$ and $(k\Leftrightarrow a)$ See Theorem 3.5 in \cite{GM}. 

$(l\Rightarrow c)$ It is obvious.

$(c\Rightarrow l)$ It is clear by Proposition \ref{p1}.

$(m\Leftrightarrow l)$ In this case $C(X)$ is  a Noetherian ring. Now by Corollary 3.16 and  Proposition 3.19 in \cite{AM2} we are done.

$(n\Leftrightarrow l)$ It is similar to the above item and by Corollary 3.16 and  Proposition 3.20 in \cite{AM2} we are done.
\end{proof}

\begin{cor}
Let $X$ be a compact space. A maximal ideal $M$ of $C(X)$ is $z^\circ$-ideal if and only if it is countable strongly annihilated.
\end{cor}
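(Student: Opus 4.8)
The plan is to deduce this directly from Proposition~\ref{p1}, which characterizes a countable strongly annihilated maximal ideal of $C(X)$ as precisely a maximal ideal that is simultaneously a $z^\circ$-ideal \emph{and} real maximal. The entire content of the corollary is that, under compactness, the ``real'' clause becomes automatic, so the two-part criterion of Proposition~\ref{p1} collapses to the single condition of being a $z^\circ$-ideal.

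First I would record that every maximal ideal of $C(X)$ is real when $X$ is compact. Indeed, for compact $X$ one has $\beta X = X$, so every maximal ideal is fixed, of the form $M_p$ for some $p\in X$; and since $\upsilon X = X = \beta X$ together with $\upsilon X=\{p\in\beta X: M^p \text{ is a real maximal ideal}\}$ forces each such $M_p=M^p$ to be real. Equivalently, evaluation at $p$ gives $C(X)/M_p\cong\mathbb{R}$, so $M_p$ is real straight from the definition.

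With this observation both implications are immediate. If $M$ is countable strongly annihilated, then Proposition~\ref{p1} already yields that $M$ is a $z^\circ$-ideal, with no use of compactness. Conversely, if $M$ is a $z^\circ$-ideal, then it is also real maximal by the preceding paragraph, whence Proposition~\ref{p1} returns that $M$ is countable strongly annihilated. This settles the equivalence.

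There is essentially no obstacle here: the corollary is a specialization of results already in hand. Since every compact space is pseudocompact, one could alternatively cite Corollary~\ref{c1} verbatim; the only genuinely used fact is the standard topological identity $\upsilon X=\beta X=X$ for compact $X$, which is exactly what renders the reality hypothesis of Proposition~\ref{p1} redundant.
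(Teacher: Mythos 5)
Your argument is correct and follows essentially the same route as the paper: both reduce the corollary to the fact that compactness forces every maximal ideal to be fixed (hence real), after which the characterization of countable strongly annihilated maximal ideals as real maximal $z^\circ$-ideals (Proposition~\ref{p1}, which is also what underlies the $(l\Leftrightarrow c)$ implication of Proposition~\ref{P1} that the paper cites) does the rest. Your observation that the statement is already contained in Corollary~\ref{c1} via pseudocompactness is also accurate.
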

\begin{proof}
In this case by Theorem 4.11 of \cite{GJ} every maximal ideal in $C(X)$ is of the form $M_p$, for a $p\in X$. Now by implication $(l\Leftrightarrow c)$ of Proposition \ref{P1} we are done. 
\end{proof}

\begin{prop}
The following statements are equivalent.\\
a)  $X$ is an  almost $P$-space.\\
b) Every $M_x$ is an $\aleph_0-z^\circ$-ideal.\\
c) Every $M_x$ is a countable strongly annihilated ideal.

\end{prop}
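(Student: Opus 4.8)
The plan is to establish the cycle $(a)\Rightarrow(c)\Rightarrow(b)\Rightarrow(a)$, leaning throughout on the elementary but crucial observation that every fixed maximal ideal $M_x$ is automatically a \emph{real} maximal ideal: the evaluation homomorphism $f\mapsto f(x)$ induces $\frac{C(X)}{M_x}\cong\Bbb R$. This is precisely what lets me invoke Proposition \ref{p1}, whose hypothesis demands reality.

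First, for $(a)\Rightarrow(c)$, I would argue as follows. If $X$ is an almost $P$-space, then every point $x$ is an almost $p$-point, and hence each $M_x$ is a $z^\circ$-ideal (this is the characterization recorded in Theorem 2.14 of \cite{AKR0}, used already in the proof of Proposition \ref{p2}). Combining this with the reality of $M_x$ noted above, Proposition \ref{p1} applies directly and yields that each $M_x$ is a countable strongly annihilated ideal.

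The implication $(c)\Rightarrow(b)$ is then immediate: since $C(X)$ is a reduced ring, the earlier proposition asserting that every countable strongly annihilated ideal of a reduced ring is an $\aleph_0-z^\circ$-ideal applies verbatim to each $M_x$. Finally, for $(b)\Rightarrow(a)$, I would apply the $\aleph_0-z^\circ$ condition to the singleton subsets $\{f\}\subseteq M_x$, whose cardinality is $1\leq\aleph_0$; this gives $P_f\subseteq M_x$ for every $f\in M_x$, so each $M_x$ is a $z^\circ$-ideal. Consequently every $x$ is an almost $p$-point, and $X$ is an almost $P$-space, closing the cycle.

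I expect the only genuinely delicate step to be $(a)\Rightarrow(c)$, since it is the one that must be routed through Proposition \ref{p1}; the whole argument hinges on first recording that $M_x$ is real so that the hypothesis of that proposition is met. The remaining two implications are formal consequences of results already established — $(c)\Rightarrow(b)$ from reducedness of $C(X)$, and $(b)\Rightarrow(a)$ from the trivial fact that an $\aleph_0-z^\circ$-ideal is in particular a $z^\circ$-ideal.
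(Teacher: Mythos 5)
Your proof is correct, but it routes the equivalence differently from the paper. You close the cycle as $(a)\Rightarrow(c)\Rightarrow(b)\Rightarrow(a)$ and delegate every step to previously established results: $(a)\Rightarrow(c)$ to Proposition \ref{p1} after observing that the fixed maximal ideal $M_x$ is real (via $\frac{C(X)}{M_x}\cong\Bbb R$) and is a $z^\circ$-ideal by Theorem 2.14 of \cite{AKR0}; $(c)\Rightarrow(b)$ to the Section~2 proposition that every countable strongly annihilated ideal of a reduced ring is an $\aleph_0$-$z^\circ$-ideal; and $(b)\Rightarrow(a)$ to the trivial remark that applying the $\aleph_0$-$z^\circ$ condition to singletons makes $M_x$ a $z^\circ$-ideal. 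The paper instead proves $(a)\Rightarrow(b)\Rightarrow(c)\Rightarrow(a)$, doing the first two implications by hand: it takes the function $f$ with ${\rm Ann}(f)=\bigcap_{n=1}^\infty{\rm Ann}(f_n)$ (and $f\in M_x$), shows the containment $P_S\subseteq P_f$ by passing through minimal primes, and then extracts the annihilating element $h$ explicitly; only $(c)\Rightarrow(a)$ is cited to \cite{AKR0}. Your version is shorter and reuses the paper's own machinery (in particular it makes the role of reality of $M_x$ and reducedness of $C(X)$ explicit), while the paper's version is self-contained and exhibits the concrete annihilator witness, which also makes the implication $(a)\Rightarrow(b)$ available without invoking Proposition \ref{p1}. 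Both arguments are sound.
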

\begin{proof}

$(a\Rightarrow b)$ Let $S=\{f_n:n=1,\ldots\}$ be a countable subset of $M_x$ and suppose that $f\in M_x$ in which ${\rm Ann}(f)=\bigcap_{n=1}^\infty{\rm Ann}(f_n)$. We claim that $P_S\subseteq P_f$. Let $g\in P_S$ and $f\in Q$, where $Q$ is a minimal prime ideal of $C(X)$. Then there is $h\notin Q$ such that $hf=0$ and hence $hf_n=0$, for every $n\in \Bbb N$. This shows that $f_n\in Q$, for every $n$, and consequence that $g\in Q$. Therefore $g\in P_f$. Since $M_x$ is a $z^\circ$-ideal, we infer that $P_S\subseteq P_f\subseteq M_x$ which complete  the proof. 

$(b\Rightarrow c)$ Let $S=\{f_n:n=1,\ldots\}$ be a countable subset of $M_x$ and  $g\notin M_x$. Suppose that $f\in M_x$ in which ${\rm Ann}(f)=\bigcap_{n=1}^\infty{\rm Ann}(f_n)$. Then $P_f\subseteq M_x$ and hence $g\notin P_f$. Therefore there is a minimal prime ideal $Q$ such that $f\in Q$ but $g\notin Q$. Hence there is $h\notin Q$ such that $hf=0$ and $hg\neq0$ and we are done.

$(c\Rightarrow a)$ It is clear by Theorem 2.14 of \cite{AKR0}.
\end{proof}
\begin{exam}\label{e11}
We consider the space $\Sigma$ of 4M in \cite{GJ}. Let $f\in C(\Sigma)$ with $Z(f)=\{\sigma\}$. Since $f$ is a regular element, by Corollary 2.6 in  \cite{AES}, then every element of $\frac{C(\Sigma)}{I}$ is either a
unit or a zerodivisor, where $I=(f)$. Hence, the maximal ideal $\frac{M_\sigma}{I}$ is a nonregular ideal. One can easily see that every prime ideal containing $f$ is contained in $M_\sigma$. This implies that, for every subset $S$ of $\frac{M_\sigma}{I}$, the intersection of all minimal prime ideals containing $S$ is a subset of $\frac{M_\sigma}{I}$. Since $|M_\sigma|=c$, we conclude that $\frac{M_\sigma}{I}$ is a $c-z^\circ$-ideal. 
\end{exam}

\begin{Rem}
a) 	Recall that $C_F(X)$ is the socle of $C(X)$ and it is well-known  that $C_F(X)=\{f\in C(X):coz(f)~\mbox{is finite}\}$, see \cite{KR}. $C_F(X)$ is always a $z^\circ$-ideal but it is not necessarily a countable strongly annihilated ideal, even if $X$ is a $P$-space. For example, let $f_n\in C(\Bbb N)$ such that $coz(f_n)=\{n\}$, for any $n\in \Bbb N$. Then $f_n\in C_F(X)$, but $\Bbb N=coz(f)=\bigcup_{n=1}^\infty coz(f_n)\notin C_F(X)$.\\
b) $SC_F(X)=\{f\in C(X):coz(f)~\mbox{is countable}\}$ is called the super socle of $C(X)$, the reader refer to \cite{GKN} for details.  $SC_F(X)$ is always a $z$-ideal and it is clear that it closed under countable intersection. If $X$ is a weak $P$-space (a space which every countable set is closed) then $SC_F(X)$ is a $z^\circ$-ideal and so it is a countable strongly annihilated ideal.\\
c) If $SC_F(X)$ is a $z^\circ$-ideal, then it is not necessarily $X$ is a weak $P$-space. For example, let $X^*=X\cup\{\alpha\}$ be a one-point compactification of $X$, which $X$ is a uncountable discrete space. Obviously, $X^*$ is not a weak $P$-space. On the other hand if $A$ is an infinite countable subset of $X^*$, then ${\rm cl}(A)=A\cup\{\alpha\}$ whence we conclude that  $SC_F(X)$ is a $z^\circ$-ideal.\\
d) One can easily see that  $SC_F(X)$ is weakly annihilated if and only if it is a countable strongly annihilated ideal.\\
e) It is easy to see that a discrete space $X$ is finite if and only if $C_F(X)$ is a  countable strongly annihilated ideal.\\
f) Let $X$ be a countably compact space and every $\sigma$-compact subset of $X$ is closed. Then $C_K(X)=\{f\in C(X):\overline{coz(f)}~\mbox{is compact}\}$ is a countable strongly annihilated ideal. To see this first note that $C_K(X)$ is always a $z^\circ$-ideal. Now suppose that  $S=\{f_n:n=1,\ldots\}$ be a countable subset of $C_K(X)$ and let $coz(f)=\bigcup_{n=1}^\infty coz(f_n)$. One can easily see that  $\overline{coz(f)}=\bigcup_{n=1}^\infty\overline{(coz(f_n))}$. Hence, $\overline{coz(f)}$  is $\sigma$-compact and so it is  Lindel\"{o}f. Furthermore, $\overline{coz(f)}$ is also a countably compact. Therefore, it is a compact subset of $X$ and consequence that $f\in C_K(X)$ and we are trough.
\end{Rem}

\vspace{0.5 cm}\noindent {\bf Acknowledgements.} The author is
grateful to Dr. A. Azarang for giving me Example 3.1. Also the author is grateful to the Research Council of Shahid Chamran University of Ahvaz financial support (GN:SCU.MM401.648).

\small {}

\begin{thebibliography}{}


\bibitem{AM1} Aliabad A.R., Mohamadian R.: On $sz^\circ$-ideals in polynomial rings. Comm. Alg. 39, 701--717 (2011)

\bibitem{AM2} Aliabad A.R., Mohamadian R.: On $z$-ideals and $z^\circ$-ideals of power series rings. J. Math. Ext. 7, 93--108 (2013) 

\bibitem {AM5} Aliabad A.R., Mohamadian R.: Prime $z$-ideal rings ($pz$-rings). Bull. Iran. Math. Soc. 48, 1177--1192 (2022)


\bibitem{AAHS} Arjmandnezhad Z., Azarpanah F., Hesari A.A., Salehi A.R.: Characterizations of maximal $z^\circ$-ideals of $C(X)$ and real maximal ideals of $q(X)$. Quaes. Math. 1--13 (2021)


\bibitem{AM3} Atiyah M.F., Macdonald I.G.: Introduction to
commutative algebra. Addison-Wesely, Reading Mass, (1969)

\bibitem{Az} Azarpanah F.: Algebraic properties of some compact spaces. Real Anal. Exch., 25, 317--328 (2000)


\bibitem{A} Azarpanah F.: On almost $P$-spaces. Far East J. Math. Soc. Special Volume 121--132 (2000)

\bibitem{AES} Azarpanah F., Esmaeilvandi D., Salehi A.R.: Depth of ideals of $C(X)$. J. Alg. 528, 474--496 (2019) 

\bibitem{AKR0} Azarpanah F., Karamzadeh O.A.S.,  Rezaei Aliabad A.: On $z^\circ$-ideals in $C(X)$. Fund. Math., 160, 15--25 (1999)


\bibitem{AKR} Azarpanah F., Karamzadeh O.A.S., Aliabad A.R.: On ideals consisting entirely of zero divisors.
Comm. Alg. 28, 1061--1073 (2000)

\bibitem{AM4} Azarpanah F., Mohamadian R.:
$\sqrt z$-ideals and $\sqrt z^\circ$-ideals in $C(X)$. Acta Math. Sin. (Engl. Ser.) 23, 989--996 (2007)

\bibitem {E} Engelking R.: General topology. Warsaw, Poland: PWN-Polish Science Publishers, (1977)

\bibitem{GKN}  Ghasemzadeh S., Karamzadeh O.A.S., Namdari M.: The super socle of the rings of continuous functions. Math. Slovaca 67, No. 4, 1001--1010 (2017)


\bibitem{G} Ghashgaei E.: Variations of essentiality of ideals in commutative rings. J. Alg. Appl. Vol. 21, No. 03, 2250056 (2022) 

\bibitem{GJ} Gillman L., Jerison M.: Rings of continuous functions. The University Series in Higher Math.,
Van Nostrand, Princeton, N. J., (1960)

\bibitem{GM} Gilmer R., McAdam S.: Ideals contracted from each extension ring. Comm. Alg. 7, 287--311 (1979)

\bibitem{HL} Heinzer W., Lantz D.: Universally contracted ideals in commutative rings. Comm. Alg. 12(10), 1265--1289 (1984)


\bibitem{HJ} Henriksen M., Jerison M.: The space of minimal prime ideals of a commutative ring. Trans. Amer. Math. Soc., 115, 110--130 (1965)


\bibitem{H}   Huckabo J.A.: Commutative ring with zero
divisors, Marcel Dekker Inc, (1988)

\bibitem{KR} Karamzadeh O.A.S., Rostami M.: On the intrinsic topology and some related ideals of $C(X)$.
Proc. Amer. Math. Soc. 93, 179--184 (1985)

\bibitem{L} Lucas T.: Two annihilator conditions: property (A) and (AC). Comm. Alg., 14(3), 557--580 (1986)

\bibitem{M} Mason G.: $z$-ideals and prime ideals. J. Alg. 26, 280–297 (1973)  

\end{thebibliography}
\end{document}